\theoremstyle{plain}
\newtheorem{theorem}{Theorem}
\newtheorem{lemma}[theorem]{Lemma} 
\newtheorem{prop}[theorem]{Proposition}
\theoremstyle{definition}
\newtheorem{definition}[theorem]{Definition}
\newtheorem*{remark}{Remark}
\newtheorem{example}[theorem]{Example}
\newcommand{\tri}{\mathcal{T}}
\newcommand{\hbody}{\mathcal{H}}
\newcommand{\manifold}{\mathcal{M}}
\newcommand{\surface}{\mathcal{S}}
\newcommand{\tree}{H}
\newcommand{\alttree}{T}
\newcommand{\altsurface}{\mathcal{F}}
\newcommand{\altaltsurface}{\mathcal{R}}
\newcommand{\problem}{\mathcal{P}}
\newcommand{\petersen}{P}
\newcommand{\compbodyone}{\mathcal{N}}
\newcommand{\compbodytwo}{\mathcal{K}}
\newcommand{\fork}{F}
\newcommand{\forkcomp}{\boldsymbol{F}}
\newcommand{\inputset}{\mathcal{I}}
\newcommand{\instance}{I}
\newcommand{\tw}[1]{\operatorname{tw} (#1)}
\newcommand{\pw}[1]{\operatorname{pw} (#1)}
\newcommand{\cw}[1]{\operatorname{cw} (#1)}
\newcommand{\cng}[1]{\operatorname{cng} (#1)}
\newcommand{\chd}[1]{\operatorname{chd} (#1)}
\newcommand{\Par}[1]{p (#1)}
\newcommand{\CW}[1]{\mathscr{L} (#1)}
\newcommand{\graph}[1]{\mathscr{G} (#1)}
\newbox{\myorcidaffilbox}
\sbox{\myorcidaffilbox}{\large\includegraphics[height=0.9em]{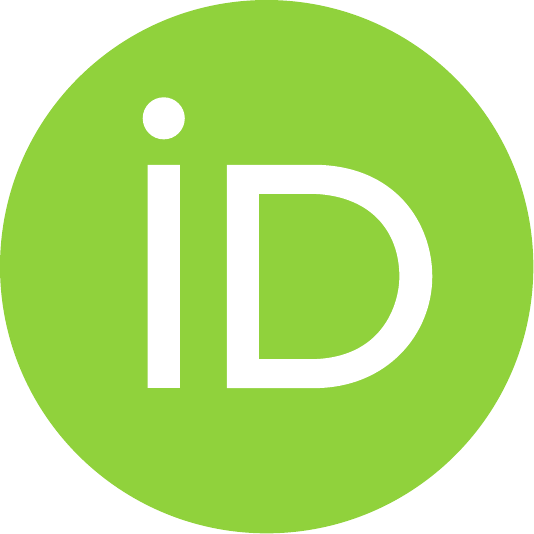}}
\newcommand{\orcidaffil}[1]{%
  \href{https://orcid.org/#1}{\usebox{\myorcidaffilbox}}}
\title{On the treewidth of triangulated $3$-manifolds\thanks{Apart from the formatting and updated references, this manuscript is identical to the final version published in the Journal of Computational Geometry \cite{huszar2019treewidth}. Author affiliations correspond to the time of publication.}}
\author[$\dagger$]{Krist\'of Husz\'ar\,\orcidaffil{0000-0002-5445-5057}\,}
\author[$\ddagger$]{Jonathan Spreer\,\orcidaffil{0000-0001-6865-9483}\,}
\author[$\dagger$]{Uli Wagner\,\orcidaffil{0000-0002-1494-0568}\,}
\affil[$\dagger$]{\normalsize Institute of Science and Technology Austria, Klosterneuburg, Austria}
\affil[$\ddagger$]{\normalsize Institut f\"ur Mathematik, Freie Universit\"at Berlin, Germany}
\date{}
\begin{document}
\maketitle

\begin{abstract}
In graph theory, as well as in $3$-manifold topology, there exist several width-type parameters to describe how ``simple'' or ``thin'' a given graph or $3$-manifold is. These parameters, such as pathwidth or treewidth for graphs, or the concept of thin position for $3$-manifolds, play an important role when studying algorithmic problems; in particular, there is a variety of problems in computational $3$-manifold topology---some of them known to be computationally hard in general---that become solvable in polynomial time as soon as the dual graph of the input triangulation has bounded treewidth.

In view of these algorithmic results, it is natural to ask whether every $3$-manifold admits a triangulation of bounded treewidth. We show that this is not the case, i.e., that there exists an infinite family of closed $3$-manifolds not admitting triangulations of bounded pathwidth or treewidth (the latter implies the former, but we present two separate proofs).

We derive these results from work of Agol, of Scharlemann and Thompson, and of Scharlemann, Schultens and Saito by exhibiting explicit connections between the topology of a $3$-manifold $\manifold$ on the one hand and width-type parameters of the dual graphs of triangulations of  $\manifold$ on the other hand, answering a question that had been raised repeatedly by researchers in computational $3$-manifold topology.
In particular, we show that if a closed, orientable, irreducible, non-Haken $3$-manifold $\manifold$ has a triangulation of treewidth (resp.\ pathwidth) $k$ then the Heegaard genus of $\manifold$ is at most $18(k+1)$ (resp.\ $4(3k+1)$).
\end{abstract}

\section{Introduction}
\label{sec:intro}

In the field of $3$-manifold topology many fundamental problems can be solved algorithmically. Famous examples include deciding whether a given knot is trivial \cite{haken1961normal}, deciding whether a given $3$-manifold is homeomorphic to the $3$-sphere \cite{rubinstein1995algorithm,thompson1994thin}, and, more generally (based on Perelman's proof of Thurston's geometrization conjecture \cite{kleiner2008perelman}), deciding whether two given $3$-manifolds are homeomorphic, see, e.g., \cite{bessieres2010geometrisation,kuperberg2019algorithmic,scott2014homeomorphism}. The algorithm for solving the homeomorphism problem is still purely theoretical, and its complexity remains largely unknown \cite{kuperberg2019algorithmic,lackenby2017conditionally}. In contrast, the first two problems are known to lie in the intersection of the complexity classes {\bf NP} and {\bf co-NP} \cite{hass1999computational,ivanov2008computational,kuperberg2014knottedness,lackenby2016efficient,schleimer2011sphere,zentner2018integer}.\footnote{The proof of {\bf co-NP} membership for 3-sphere recognition assumes the Generalized Riemann Hypothesis.}

Moreover, implementations of, for instance, algorithms to recognize the $3$-sphere exist out-of-the-box (e.g., using the computational $3$-manifold software {\em Regina} \cite{regina}) and exhibit practical running times for virtually all known inputs.

In fact, many topological problems with implemented algorithmic solutions solve problem instances of considerable size. This is despite the fact that  most of these implementations have prohibitive worst-case running times, or the underlying problems are even known to be computationally hard in general. In recent years, there have been several attempts to explain this gap using the concepts of parameterized complexity and algorithms for fixed parameter tractable (FPT) problems \cite{downey1999parameterized,downey2013fundamentals}. This effort has proven to be highly effective and, today, there exist numerous FPT algorithms in the field \cite{burton2017courcelle, burton2016parameterized, burton2018algorithms, burton2013complexity, maria2019polynomial}. More specifically, given a triangulation $\tri$ of a $3$-manifold $\manifold$ with $n$ tetrahedra whose dual graph $\Gamma (\tri)$ has treewidth\footnote{We often simply speak of the treewidth of a triangulation, meaning the treewidth of its dual graph.} at most $k$, there exist algorithms to compute

\begin{itemize}
	\item taut angle structures\footnote{Taut angle structures are combinatorial versions of semi-simplicial metrics which have implications on the geometric properties of the underlying manifold.} of what is called ideal triangulations with torus boundary components in running time $O(7^k \cdot n)$ \cite{burton2013complexity};
	\item optimal Morse matchings\footnote{Optimal Morse matchings translate to discrete Morse functions with the minimum number of critical points with respect to the combinatorics of the triangulation and the topology of the underlying $3$-manifold.} in the Hasse diagram of $\tri$ in $O(4^{k^2+k} \cdot k^3 \cdot \log{k} \cdot n)$ \cite{burton2016parameterized};
	\item the Turaev--Viro invariants\footnote{Turaev--Viro invariants are powerful tools to distinguish between $3$-manifolds. They are the method of choice when, for instance, creating large censuses of manifolds.} for parameter $r \geq 3$ in $O((r-1)^{6(k+1)} \cdot k^2 \cdot \log{r} \cdot n)$ \cite{burton2018algorithms};
	\item every problem which can be expressed in monadic second-order logic in $O(f(k) \cdot n)$, where $f$ often is a tower of exponentials \cite{burton2017courcelle}.\footnote{This result is analogous to Courcelle's celebrated theorem in graph theory \cite{courcelle1990monadic}.}
\end{itemize}

Some of these results are not purely theoretical (as is sometimes the case with FPT algorithms) but are implemented and outperform previous state-of-the-art implementations for typical input. As a result, they have a significant practical impact. This is in particular the case for the algorithm to compute Turaev--Viro invariants \cite{burton2018algorithms,maria2019polynomial}. 

Note that treewidth (the dominating factor in the running times given above) is a combinatorial quantity linked to a triangulation, not a topological invariant of the underlying manifold. This gives rise to the following approach to efficiently solve topological problems on a $3$-manifold $\manifold$: given a triangulation $\tri$ of $\manifold$, search for a triangulation $\tri'$ of the same manifold with smaller treewidth. 

This approach faces severe difficulties. By a theorem due to Kirby and Melvin \cite{kirby2004local}, the Turaev--Viro invariant for parameter $r=4$ is ${\bf \# P}$-hard to compute. Thus, if there were a polynomial time procedure to turn an $n$-tetrahedron triangulation $\tri$ into a $\operatorname{poly} (n)$-tetrahedron triangulation $\tri'$ with dual graph of treewidth at most $k$, for some universal constant $k$, then this procedure, combined with the algorithm from \cite{burton2018algorithms}, would constitute a polynomial time solution for a ${\bf \# P}$-hard problem. Furthermore, known facts imply that \emph{most} triangulations of \emph{most} $3$-manifolds must have large treewidth\footnote{It is known that, given $k \in \mathbb{N}$, there exist constants $C,C_k > 1$ such that there are at least $C^{n\log(n)}$ 3-manifolds which can be triangulated with $\leq n$ tetrahedra, whereas there are at most $C_k^n$ triangulations with treewidth $\leq k$ and $\leq n$ tetrahedra.} (see Propositions~\ref{prop:manyMfds} and \ref{prop:fewSmallTW} in Appendix~\ref{app:most}).
However, while these arguments indicate that triangulations of small treewidth may be rare and computationally hard to find, they do not rule out that every manifold has some (potentially very large) triangulation of bounded treewidth.

In this article we show that this is actually not the case, answering a question that had been raised repeatedly by researchers in computational $3$-manifold topology.\footnote{The question whether every 3-manifold admits a triangulation of bounded treewidth, and variations thereof have been asked at several meetings and open problem sessions including an Oberwolfach meeting in 2015 \cite[Problem 8]{MFOReports} (formulated in the context of knot theory).} More specifically, we prove the following two statements.

\begin{theorem}
\label{thm:pw}
There exists an infinite family of $3$-manifolds which does not admit triangulations with dual graphs of uniformly bounded pathwidth.
\end{theorem}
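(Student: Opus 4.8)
The plan is to exhibit a concrete infinite family of closed 3-manifolds whose Heegaard genus tends to infinity, together with a topological invariant that forces large pathwidth of every triangulation. The abstract already announces the key quantitative link: if a closed, orientable, irreducible, non-Haken 3-manifold $\manifold$ has a triangulation of pathwidth $k$, then the Heegaard genus of $\manifold$ is at most $4(3k+1)$. So the overall strategy is: (i) establish this inequality, and (ii) find non-Haken 3-manifolds of arbitrarily large Heegaard genus. For (ii) one can take, for instance, suitable sequences of closed hyperbolic 3-manifolds obtained by Dehn surgery (or the Scharlemann--Thompson / Agol examples referenced in the paper) that are known to be non-Haken and to have Heegaard genus growing without bound; these are classical and can be cited.

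The heart of the argument is step (i), relating pathwidth of the dual graph $\Gamma(\tri)$ to Heegaard genus. First I would recall that a path decomposition of $\Gamma(\tri)$ of width $k$ gives, by sweeping through the bags from one end to the other, a way to build up the triangulation $\tri$ tetrahedron by tetrahedron so that at every stage the boundary of the partial subcomplex is "small" — more precisely, the number of tetrahedra straddling any bag is at most $k+1$, so the frontier surface separating the already-added tetrahedra from the rest has complexity (e.g.\ genus, or number of triangles) bounded linearly in $k$. This turns a low-pathwidth triangulation into a linear sequence of surfaces of bounded genus, i.e.\ a decomposition of $\manifold$ into pieces along surfaces of genus $O(k)$. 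Feeding this into the theory of generalized Heegaard splittings and thin position (Scharlemann--Thompson, Scharlemann--Schultens--Saito), together with the hypothesis that $\manifold$ is irreducible and non-Haken (so that no incompressible surfaces appear and the thin surfaces can be amalgamated), one concludes that $\manifold$ admits a Heegaard splitting of genus $O(k)$ — with the explicit constant $4(3k+1)$.

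The main obstacle I expect is the passage from "the triangulation is thin in the combinatorial/graph sense" to "the manifold is thin in the topological sense of thin position." One must be careful that the frontier surfaces arising from a path decomposition are, after small modifications, actual embedded separating surfaces whose genus is controlled, and that the resulting generalized Heegaard splitting can be amalgamated into an honest one without the genus exploding — this is exactly where non-Hakenness is used, to rule out incompressible pieces that would obstruct amalgamation. Getting the constant $4(3k+1)$ (as opposed to just $O(k)$) will require a careful accounting of how many triangles lie on a frontier surface in terms of the bag size, and how the genus of that surface compares; the treewidth version (constant $24(k+1)$) presumably follows by an analogous but slightly lossier argument using the tree decomposition in place of the path. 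For the purposes of Theorem \ref{thm:pw} only the qualitative statement $\operatorname{genus}(\manifold) = O(\pw{\Gamma(\tri)})$ is needed, so once step (i) is in place the theorem follows immediately by contraposition from step (ii): if the family had uniformly bounded pathwidth it would have uniformly bounded Heegaard genus, contradicting the choice of the family.
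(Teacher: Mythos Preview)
Your proposal is correct and follows essentially the same route as the paper: bound Heegaard genus linearly in pathwidth via a sweep through the triangulation and Scharlemann--Thompson thin position (using non-Hakenness to force the thin linear splitting to be a genuine Heegaard splitting), then invoke Agol's non-Haken manifolds of unbounded genus. The only notable difference is that the paper does not work directly with bags of a path decomposition but instead passes through \emph{cutwidth}: it uses Bodlaender's inequality $\cw{\Gamma(\tri)} \leq 4\,\pw{\Gamma(\tri)}$ and then bounds the linear width $\CW{\manifold}$ by $6\,\cw{\Gamma(\tri)}+7$, since cutsets correspond directly to sets of triangles and hence to the $1$-handles of the canonical handle decomposition whose attachment controls the genus of the intermediate surfaces. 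Your direct bag-based sweep would work too, but the cutwidth formulation makes the genus bookkeeping cleaner and is what yields the explicit constant $4(3k+1)$.
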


\begin{theorem}
\label{thm:tw}
There exists an infinite family of $3$-manifolds which does not admit triangulations with dual graphs of uniformly bounded treewidth.
\end{theorem}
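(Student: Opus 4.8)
The plan is to reduce Theorem~\ref{thm:tw} to the corresponding statement about the Heegaard genus announced in the abstract, and then exhibit an explicit infinite family of $3$-manifolds of unbounded Heegaard genus to which that statement applies. Concretely, I would first establish the structural inequality that a closed, orientable, irreducible, non-Haken $3$-manifold $\manifold$ with a triangulation $\tri$ of treewidth $k$ has Heegaard genus $\mathfrak{g}(\manifold) \le 24(k+1)$. Contrapositively, if a family $(\manifold_n)_{n\in\mathbb{N}}$ of such manifolds has $\mathfrak{g}(\manifold_n) \to \infty$, then no sequence of triangulations of the $\manifold_n$ can have uniformly bounded treewidth, which is exactly Theorem~\ref{thm:tw}. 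The companion Theorem~\ref{thm:pw} would follow the same way from the (easier, since pathwidth $\ge$ treewidth) bound $\mathfrak{g}(\manifold) \le 4(3k+1)$.

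The heart of the argument is the structural inequality, and this is where I expect the main obstacle to lie. The idea is to pass from a treewidth-$k$ triangulation $\tri$ of $\manifold$ to a handle decomposition (or generalized Heegaard splitting) of $\manifold$ whose ``complexity'' is controlled linearly by $k$. Starting from a tree decomposition of the dual graph $\Gamma(\tri)$ of width $k$, one wants to build a layered/sweepout structure: process the tetrahedra in an order compatible with the tree decomposition, attaching them one bag at a time, so that at every stage the ``frontier'' between the part of $\manifold$ already built and the part not yet built is a surface meeting only $O(k)$ tetrahedra, hence of genus $O(k)$. This is essentially the observation that bounded treewidth of the dual graph yields a sweepout of $\manifold$ by surfaces of bounded genus; the bookkeeping is in turning a tree decomposition (rather than a path decomposition) into such a sweepout, which is where branchings of the tree have to be merged and where the constant $24$ as opposed to the pathwidth constant $12$ (in $4(3k+1)$ the coefficient of $k$ is $12$) comes from. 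Having obtained a generalized Heegaard splitting of genus $O(k)$, I would invoke the work of Scharlemann--Thompson on thin position and untelescoping: a closed, orientable, irreducible, non-Haken manifold admits no nontrivial generalized Heegaard splitting in an essential way, so the generalized splitting can be amalgamated back into an honest Heegaard splitting of genus still $O(k)$, giving $\mathfrak{g}(\manifold) = O(k)$ with the explicit constant claimed. (This is where ``irreducible'' and ``non-Haken'' are used in an essential way: without them one only controls a generalized splitting, not the Heegaard genus itself.)

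The remaining step is to produce the infinite family. Here I would appeal to Agol's examples (or to any standard construction) of closed, orientable, irreducible, non-Haken $3$-manifolds of arbitrarily large Heegaard genus---for instance, suitable Dehn fillings of a fixed cusped hyperbolic manifold, or congruence covers in an arithmetic setting, chosen to remain non-Haken while forcing the Heegaard genus (equivalently, by a rank-versus-genus type estimate, the rank of the fundamental group, or the first homology, or the Turaev--Viro invariant growth) to tend to infinity. Feeding such a family into the structural inequality immediately yields both theorems. The two potential soft spots are (i) making sure the chosen family genuinely stays irreducible and non-Haken for all $n$---this should be citable from Agol's work referenced in the introduction---and (ii) the constant-tracking in the sweepout-to-Heegaard-genus reduction; but since the theorems as stated only assert \emph{uniformly bounded} pathwidth/treewidth, even a crude linear bound $\mathfrak{g}(\manifold) \le c\cdot k + c'$ with unspecified $c,c'$ would suffice to prove Theorems~\ref{thm:pw} and~\ref{thm:tw}, so the explicit constants are a refinement rather than a necessity for these two statements.
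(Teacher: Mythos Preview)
Your high-level strategy coincides with the paper's: prove the structural inequality $\mathfrak{g}(\manifold)\le 24(k+1)$ for closed, orientable, irreducible, non-Haken $\manifold$ (Theorem~\ref{thm:heeg_tw}), and then feed in Agol's family of non-Haken $3$-manifolds of unbounded Heegaard genus (Theorem~\ref{thm:a}) to conclude Theorem~\ref{thm:tw}. On that level your proposal is correct and matches the paper.

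Where the routes diverge is in how the structural inequality is obtained. You propose working directly from a tree decomposition of $\Gamma(\tri)$, processing bags to produce a sweepout/generalized Heegaard splitting of width $O(k)$, and then invoking Scharlemann--Thompson to amalgamate. The paper instead inserts \emph{congestion} (carving width) as an intermediary: by Bienstock's inequality $\cng{\Gamma(\tri)}\le 4(\tw{\Gamma(\tri)}+1)$, it suffices to bound the \emph{graph width} of $\manifold$ by $6\,\cng{\Gamma(\tri)}$, and this is done by building a \emph{graph splitting} (fork complex) modeled on the unrooted binary host tree realizing the congestion, rather than a linear splitting. The upshot is that the branching of the tree is absorbed into the fork-complex formalism, and the relevant thinning result is Scharlemann--Schultens--Saito (Theorem~\ref{thm:graphincompressible}), not Scharlemann--Thompson (which governs \emph{linear} splittings). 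Your sentence ``merging branchings of the tree'' is exactly the step where working directly from a tree decomposition is awkward: linearizing the tree would push you back toward pathwidth, so one really wants a tree-shaped splitting and the corresponding SSS theorem. With that correction your plan is sound; the congestion detour mainly buys cleaner bookkeeping and the explicit constant~$24$.
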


We establish the above results through the following theorems, which are the main contributions of the present paper. The necessary terminology is introduced in Section \ref{sec:prelims}.

\begin{theorem}
\label{thm:heeg_pw}
Let $\manifold$ be a closed, orientable, irreducible, non-Haken $3$-manifold and let $\tri$ be a triangulation of $\manifold$ with dual graph $\Gamma(\tri)$ of pathwidth $\pw{\Gamma(\tri)} \leq k$. Then $\manifold$ has Heegaard genus $\mathfrak{g}(\manifold) \leq 4(3k+1)$.
\end{theorem}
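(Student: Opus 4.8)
The plan is to connect the combinatorial structure of a small-pathwidth triangulation to a topological decomposition of the manifold. The key intuition: a path decomposition of the dual graph $\Gamma(\tri)$ of width $k$ gives a way to ``sweep'' through the triangulation, encountering at most $k+1$ tetrahedra at a time. Dualizing this, one should be able to build a sequence of surfaces (subcomplexes of the second barycentric subdivision, or normal-ish surfaces) that sweep through $\manifold$, each of bounded genus (controlled linearly by $k$), giving a \emph{generalized Heegaard splitting} of bounded width in the sense of Scharlemann--Thompson. Since $\manifold$ is irreducible and non-Haken, every such generalized splitting can be amalgamated into an honest Heegaard splitting without increasing the genus bound (this is precisely where the non-Haken hypothesis is used: there are no incompressible surfaces to obstruct the standard untelescoping/amalgamation arguments), yielding $\mathfrak{g}(\manifold) \leq 4(3k+1)$.

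First I would take a path decomposition $(B_1, \ldots, B_m)$ of $\Gamma(\tri)$ of width $k$ and, after standard clean-up, assume consecutive bags differ by one tetrahedron (introduce/forget nodes), so each ``level set'' separates the tetrahedra into a left part and a right part glued along a subcomplex of the $2$-skeleton. Passing to the barycentric subdivision, the frontier between the tetrahedra dual to $B_1 \cup \cdots \cup B_i$ and the rest is a closed surface $\altsurface_i$ embedded in $\manifold$; because only $O(k)$ tetrahedra straddle the cut, $\altsurface_i$ meets boundedly many simplices and hence has genus $O(k)$ — I would nail down the precise constant by counting how many triangles/edges of the $2$-skeleton can appear in the frontier of a set of at most $k+1$ tetrahedra. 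The regions between consecutive $\altsurface_i$ are (up to the single tetrahedron being added) essentially product regions or handle attachments, so the sequence $\altsurface_0, \altsurface_1, \ldots, \altsurface_m$ is a generalized Heegaard splitting: compressionbodies stacked along thin and thick levels, with every thick level of genus at most some explicit $g_0 = O(k)$.

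Next I would invoke the Scharlemann--Thompson machinery (and the non-Haken hypothesis) to convert this generalized splitting into a genuine Heegaard splitting. In the non-Haken case, thin levels can be assumed to be $2$-spheres (an essential non-sphere thin level would be incompressible after the splitting is made strongly irreducible, contradicting non-Hakenness), so the manifold is a ``stack'' of genus-$\le g_0$ Heegaard splittings of punctured pieces glued along spheres; amalgamating these and filling in the spheres using irreducibility gives a single Heegaard splitting whose genus is controlled by the largest thick level, i.e.\ $\mathfrak{g}(\manifold) \le c \cdot g_0$ for an absolute constant $c$. Tracking all constants carefully through the frontier-counting and the amalgamation should produce exactly $4(3k+1)$.

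\textbf{The main obstacle} I anticipate is the careful bookkeeping in the surface-genus estimate: getting from ``at most $k+1$ tetrahedra cross the cut'' to a clean linear bound on the genus of $\altsurface_i$ requires choosing the right subdivision and the right way to read off the separating surface, and the final constant $4(3k+1)$ is clearly the result of an optimized such count (the factor $3$ presumably coming from the three ``new'' bags or the structure of the path decomposition after cleanup, the factor $4$ from Euler-characteristic bookkeeping in the amalgamation). A secondary subtlety is ensuring the generalized splitting one reads off the path decomposition is genuinely in Scharlemann--Thompson normal form — or can be put there without increasing width — so that the amalgamation theorems apply verbatim; this is where one must be careful that ``adding one tetrahedron'' really corresponds to an elementary compressionbody move rather than something wilder.
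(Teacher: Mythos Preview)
Your outline is in the right spirit---build a linear (generalized Heegaard) splitting from a sweep through the triangulation, bound the genera of the level surfaces combinatorially, then invoke Scharlemann--Thompson for irreducible non-Haken manifolds---and this is what the paper does. But the paper's execution differs from yours in two places that make the constants transparent, and your guesses about where the $4$ and the $3$ come from are off.

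First, the paper does \emph{not} work directly with the path decomposition. It passes through \emph{cutwidth}: by Bodlaender's inequality $\cw{G} \leq \Delta(G)\,\pw{G}$ and $4$-regularity of $\Gamma(\tri)$, one gets $\cw{\Gamma(\tri)} \leq 4k$---this is the factor $4$. A linear ordering of the tetrahedra realising this cutwidth gives, at each cut, at most $4k$ triangles; the associated bounding surface in the canonical handle decomposition $\chd{\tri}$ has sum of genera at most $3\cdot 4k$ (each triangle contributes at most three edges, hence three $2$-handles, to its neighbourhood), plus an additive $4$ from the at most four $1$-handles attached when inserting a single tetrahedron. That is the $3$ and the $+1$. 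The net inequality is $\CW{\manifold} \leq 6\,\cw{\Gamma(\tri)} + 7$; plugging in $\cw{\Gamma(\tri)} \leq 4k$ and $\CW{\manifold} = 2\mathfrak{g}(\manifold)-1$ gives $\mathfrak{g}(\manifold) \leq 12k+4 = 4(3k+1)$.

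Second, the paper does not amalgamate by hand. It uses Scharlemann--Thompson in the form: for $\manifold$ irreducible non-Haken, the thin-position linear splitting \emph{is} a Heegaard splitting, hence $\CW{\manifold} = 2\mathfrak{g}(\manifold)-1$ exactly; then \emph{any} linear splitting, in particular the one just constructed, has width at least $\CW{\manifold}$, and the bound is immediate. Your sketch (``thin levels are spheres, amalgamate along them, genus controlled by the largest thick level'') reaches the same endpoint but needs more care than you indicate: amalgamating $s$ pieces along spheres a priori \emph{sums} the genera rather than taking the maximum, so you would additionally need to argue via irreducibility that all but one piece sit inside balls. The width-ordering argument sidesteps this entirely.
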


\begin{theorem}
\label{thm:heeg_tw}
Let $\manifold$ be a closed, orientable, irreducible, non-Haken $3$-manifold and let $\tri$ be a triangulation of $\manifold$ with dual graph $\Gamma(\tri)$ of treewidth $\tw{\Gamma (\tri)} \leq k$. Then $\manifold$ has Heegaard genus $\mathfrak{g}(\manifold) \leq 18(k+1)$.
\end{theorem}

By a result of Agol \cite{agol2003small} (Theorem \ref{thm:a} in this paper), there exist closed, orientable, irreducible, non-Haken $3$-manifolds of arbitrarily large Heegaard genus. Combining this result with Theorems~\ref{thm:heeg_pw} and \ref{thm:heeg_tw} thus immediately implies Theorems \ref{thm:pw} and \ref{thm:tw}.

\begin{remark}
Note that Theorem~\ref{thm:pw} can be directly deduced from Theorem~\ref{thm:tw} since the pathwidth of a graph is always at least as large as its treewidth.\footnote{This is immediate from the definitions of treewidth and pathwidth, see Section~\ref{sec:params}.} Nonetheless, we provide separate proofs for each of the two statements. The motivation is that while the proof of Theorem~\ref{thm:heeg_pw} is considerably simpler than that of Theorem~\ref{thm:heeg_tw}, it already illustrates several key concepts and ideas which we are building upon in the proof of Theorem~\ref{thm:heeg_tw}.
\end{remark}

The paper is organized as follows. After going over the preliminaries in Section~\ref{sec:prelims}, we give an overview of selected width-type graph parameters in Section~\ref{sec:params}. Most notably, we propose the {\em congestion} of a graph (also known as {\em carving width}) as an alternative choice of a parameter for FPT algorithms in $3$-manifold topology (cf.\ Appendix~\ref{app:fpt}). Section~\ref{sec:3mfdParams} is devoted to results from 3-manifold topology which we build upon. In Section~\ref{sec:lwpw} we then prove Theorem~\ref{thm:pw}, and in Section~\ref{sec:toptw} we prove Theorem~\ref{thm:tw}.

\paragraph*{Acknowledgments.} We are grateful to Arnaud de Mesmay and Saul Schleimer for noticing a gap in an earlier proof of Theorem \ref{thm:heeg_tw} and for pointing us towards key ingredients of the current proof.
Moreover, we would like to thank Jessica Purcell, Jennifer Schultens and Stephan Tillmann for helpful discussions, and the anonymous reviewers for useful comments and suggestions regarding the exposition.

This work was initiated during a visit of the second author at IST Austria. The second author would like to thank the people at IST Austria for their hospitality.
Research of the second author was supported by the Einstein Foundation (project ``Einstein Visiting Fellow Santos'') and by the Simons Foundation (``Simons Visiting Professors'' program).

\newpage

\section{Preliminaries}
\label{sec:prelims}

In this section we recall some basic concepts and terminology of graph theory, $3$-manifolds, triangulations, and parameterized complexity theory. 

\paragraph*{Graphs vs.\ triangulations.} Following several authors in the field, we use the terms {\em edge} and {\em vertex} to refer to an edge or vertex in a $3$-manifold triangulation, whereas the terms {\em arc} and {\em node} denote an edge or vertex in a graph, respectively.

\subsection{Graphs}

For general background on graph theory we refer to \cite{diestel2017graph}.

A {\em graph} (more specifically, a {\em multigraph}) $G=(V,E)$ is an ordered pair consisting of a finite set $V=V(G)$ of {\em nodes} and a multiset $E=E(G)$ of unordered pairs of nodes, called {\em arcs}. We allow {\em loops}, i.e., an arc $e \in E$ might itself be a multiset, e.g., $e = \{v,v\}$ for some $v \in V$.
The {\em degree} of a node $v \in V$, denoted by $\deg(v)$, equals the number of arcs containing it, counted with multiplicity. In particular, a loop $\{v,v\}$ contributes two to the degree of $v$. 
For every node $v \in V$ of a graph $G$, its {\em star} $\operatorname{st}_G (v)$ denotes the set of edges incident to $v$.
A graph is called {\em $k$-regular} if all of its nodes have the same degree $k \in \mathbb{N}$. A {\em tree} is a connected graph with $n$ nodes and $n-1$ arcs.
A node of degree one is called a {\em leaf}.

\subsection{3-Manifolds and their triangulations}
\label{ssec:3mfds}

For an introduction to the topology and geometry of $3$-manifolds and to their triangulations we refer to the textbook \cite{schultens2014introduction} and to the seminal monograph \cite{thurston2014three}.

A {\em $3$-manifold with boundary} is a topological space\footnote{More precisely, we only consider topological spaces which are second countable and Hausdorff.} $\manifold$ such that each point $x \in \manifold$ has a neighborhood which either looks like (i.e., is homeomorphic to) the Euclidean $3$-space $\mathbb{R}^3$ or the closed upper half-space $\{(x,y,z) \in \mathbb{R}^3 : z\geq 0\}$.
The points of $\manifold$ that do not have a neighborhood homeomorphic to $\mathbb{R}^3$ constitute the {\em boundary $\partial \manifold$} of $\manifold$. A compact $3$-manifold is {\em closed} if it has an empty boundary.

Informally, two $3$-manifolds are equivalent (or {\em homeomorphic}) if one can be turned into the other by a continuous, reversible deformation.
In other words, when talking about a $3$-manifold, we are not interested in its particular shape, but only in its qualitative properties (topological invariants), such as ``number of boundary components'', or ``connectedness''.

All $3$-manifolds considered in this article are assumed to be compact and orientable.

\paragraph*{Handle decompositions.} Every compact $3$-manifold can be built from finitely many building blocks called $0$-, $1$-, $2$-, and $3$-handles. In such a {\em handle decomposition} all handles are (homeomorphic to) $3$-balls, and are only distinguished in how they are glued to the existing decomposition.
For instance, to build a closed $3$-manifold from handles, we may start with a disjoint union of $3$-balls, or {\em $0$-handles}, where further $3$-balls are glued to the boundary of the existing decomposition along pairs of $2$-dimensional disks, the so-called {\em $1$-handles}, or along annuli, the so-called {\em $2$-handles}. This process is iterated until the boundary of the decomposition consists of a union of $2$-spheres. These are then eliminated by gluing in one additional $3$-ball per boundary component, the {\em $3$-handles} of the decomposition. 

In every step of building up a (closed) $3$-manifold $\manifold$ from handles, the existing decomposition is a submanifold whose boundary---called a {\em bounding surface}---separates $\manifold$ into two pieces: the part that is already present, and its complement (each of them possibly disconnected). 

Bounding surfaces and, more generally, all kinds of surfaces embedded in a $3$-manifold, play an important role in the study of $3$-manifolds (similar to that of simple closed curves in the study of surfaces). When chosen carefully, an embedded surface reveals valuable information about the topology of the ambient $3$-manifold.

\paragraph{Surfaces in 3-manifolds.} Given a $3$-manifold $\manifold$, a surface $\surface\subset \manifold$ is said to be {\em properly embedded}, if it is embedded in $\manifold$, and for the boundary we have $\partial \surface = \surface \cap \partial \manifold$. Let $\surface \subset \manifold$ be a properly embedded surface distinct from the $2$-sphere, and let $D$ be a disk embedded into $\manifold$ such that its boundary satisfies $\partial D = D \cap \surface$. $D$ is said to be a {\em compressing disk for $\surface$} if $\partial D$ does not bound a disk on $\surface$.  If such a compressing disk exists, then $\surface$ is called {\em compressible}, otherwise it is called {\em incompressible}. 
An embedded $2$-sphere $\surface \subset \manifold$ is called {\em incompressible} if $\surface$ does not bound a $3$-ball in~$\manifold$.\footnote{A standard example of a compressible surface is a torus (or any other orientable surface) embedded in the $3$-sphere $\mathbb{S}^3$, and of an incompressible surface is the $2$-sphere $ \mathbb{S}^2 \times \{ x \} \subset \mathbb{S}^2 \times \mathbb{S}^1$.}

A 3-manifold $\manifold$ is called {\em irreducible}, if every embedded $2$-sphere bounds a $3$-ball in $\manifold$.
Moreover, it is called {\em $P^2$-irreducible}, if it does not contain an embedded $2$-sided\footnote{A properly embedded surface $\surface \subset \manifold$ is $2$-sided in $\manifold$, if the codimension zero submanifold in $\manifold$ obtained by thickening $\surface$ has two boundary components, i.e., $\surface$ locally separates $\manifold$ into two pieces.} real projective plane $\mathbb{R}P^2$. This notion is only significant for non-orientable manifolds, since orientable $3$-manifolds cannot contain any $2$-sided non-orientable surfaces, and are readily $P^2$-irreducible. If a $P^2$-irreducible, irreducible $3$-manifold $\manifold$ contains a $2$-sided incompressible surface, then it is called {\em Haken}, otherwise it is called {\em non-Haken}.

Finally, let $\surface$ be a closed and orientable surface (possibly disconnected). The {\em genus} of $\surface$, denoted by $g(\surface)$, equals the maximum number of pairwise disjoint simple closed curves one can remove from $\surface$ without increasing the number of connected components.

\paragraph*{Handlebodies and compression bodies.} We have already discussed handle decompositions of $3$-manifolds. Closely related are the notions of handlebody and compression body.

A {\em handlebody} $\hbody$ is a connected $3$-manifold with boundary which can be described as a single $0$-handle with a number of $1$-handles attached to it, or, equivalently, as a thickened graph. Up to homeomorphism, $\hbody$ is determined by the genus $g(\partial\hbody)$ of its boundary.

Let $\surface$ be a closed, orientable (not necessarily connected) surface. A {\em compression body} is a $3$-manifold $\compbodyone$ obtained from $\surface \times [0,1]$ by attaching $1$-handles to $\surface \times \{1\}$, and filling in some of the $2$-sphere components of $\surface \times \{0\}$ with $3$-balls. $\compbodyone$ has two sets of boundary components: $\partial_- \compbodyone = \surface \times \{0\} \setminus \{\text{filled-in 2-sphere components}\}$ and $\partial_+ \compbodyone = \partial \compbodyone \setminus \partial_- \compbodyone$.

Dual to this construction, a compression body can be built by starting with a closed, orientable surface $\altsurface$, thickening it to $\altsurface \times [0,1]$, attaching $2$-handles along $\altsurface \times \{0\}$, and filling in some of the resulting $2$-spheres with $3$-balls.
This 3-manifold $\compbodyone$ has again two sets of boundary components given by $\partial_{+} \compbodyone = \altsurface \times \{1\}$ and $\partial_{-} \compbodyone = \partial \compbodyone \setminus \partial_{+} \compbodyone$. 

In accordance with \cite{hoffoss2016morse, hoffoss2017morse}, we call $\partial_{+} \compbodyone$ the {\em top boundary}, and $\partial_{-} \compbodyone$ the {\em lower boundary} of $\compbodyone$. Note that, by construction, we always have $g(\partial_{+} \compbodyone) \geq g(\partial_{-} \compbodyone)$. Moreover, if $\partial_- \compbodyone = \emptyset$, then the compression body $\compbodyone$ is actually a handlebody.

\paragraph*{Heegaard splittings.} Let $\manifold$ be a $3$-manifold, possibly with boundary. A {\em Heegaard splitting} of $\manifold$ is a decomposition $\manifold = \compbodyone \cup_{\surface} \compbodytwo$ (i.e., $\compbodyone \cup\compbodytwo = \manifold$ and $\compbodyone \cap \compbodytwo = \surface$) into compression bodies $\compbodyone$ and $\compbodytwo$ with $\surface = \partial_+\compbodyone = \partial_+\compbodytwo$ and $\partial\manifold = \partial_-\compbodyone \cup \partial_-\compbodytwo$.
The {\em Heegaard genus} of $\manifold$, denoted $\mathfrak{g}(\manifold)$, is the minimum possible genus $g(\surface)$ over all Heegaard splittings of $\manifold$.

A fundamental result of Moise \cite{moise1952affine} implies that every compact orientable 3-manifold admits a Heegaard splitting (also see the survey \cite{scharlemann2002heegaard}).

\begin{example}[Heegaard splittings from handle decompositions]
\label{ex:heegaard_splitting}
When building up a closed connected $3$-manifold $\manifold$ from handles, one may assume that (possibly after deforming the attaching maps) all $0$- and $1$-handles are attached before any $2$- or $3$-handles.
Defining $\hbody_1$ to be the union of the $0$- and $1$-handles, and $\hbody_2$ to be the union of the $2$- and $3$-handles yields a Heegaard splitting $\manifold = \hbody_1 \cup_{\surface} \hbody_2$, $\surface=\partial\hbody_1=\partial\hbody_2$, into handlebodies $\hbody_1$ and $\hbody_2$.
\end{example}

\begin{example}[Lens spaces]
\label{ex:lens}
Among the best known $3$-manifolds are the closed orientable 3-manifolds of Heegaard genus one. These manifolds are also known under the name of {\em lens spaces}. 
To construct them, let $p,q$ be two positive co-prime integers. The lens space $\operatorname{L}(p,q)$ is then obtained by taking two solid tori $\mathbb{T}_i=\mathbb{S}^1 \times D$, $i=1,2$, and gluing them together along their boundaries in a way such that the meridian $m_1 = \{x_1\}\times \partial D \subset \mathbb{T}_1$ of $\mathbb{T}_1$ is mapped onto the curve of $\mathbb{T}_2$ which wraps $p$ times around the longitude $l_2 = \mathbb{S}^1 \times \{y_2\} \subset \mathbb{T}_2$ and  $q$ times around the meridian $m_2 = \{x_2\}\times \partial D \subset \mathbb{T}_2$.
\end{example}

\paragraph*{Linear splittings.} In their work on thin position \cite{scharlemann1992thin}, discussed in Section~\ref{sec:3mfdParams}, Scharlemann and Thompson consider a generalization of Heegaard splittings, we call a {\em linear splitting}, which arises naturally from more complicated sequences of handle attachments, e.g., when building up a manifold by first only attaching some of the $0$- and $1$-handles before attaching $2$- and $3$-handles (cf.\ Example \ref{ex:linear_splitting}).\footnote{While this construction is sometimes called a {\em generalized Heegaard splitting}, we prefer the more expressive term of a {\em linear splitting} \cite{hoffoss2016morse} to make a distinction from the even more general {\em graph splittings} \cite{hoffoss2017morse}.}

\noindent A {\em linear splitting} of a $3$-manifold $\manifold$ is a decomposition
\[
	\manifold = \left(\compbodyone_1 \cup_{\altaltsurface_1} \compbodytwo_1\right) \cup_{\altsurface_1} \left(\compbodyone_2 \cup_{\altaltsurface_2} \compbodytwo_2\right) \cup_{\altsurface_2} \cdots \cup_{\altsurface_{s-1}} \left(\compbodyone_s \cup_{\altaltsurface_s} \compbodytwo_s\right),
\]
where $(\compbodyone_1, \compbodytwo_1, \ldots , \compbodyone_s, \compbodytwo_s)$ is a sequence of (possibly disconnected) compression bodies in $\manifold$. They are pairwise disjoint except for subsequent pairs, which are ``glued together'' along (pairwise disjoint) closed surfaces $\altaltsurface_1,\altsurface_1,\ldots,\altaltsurface_{s-1},\altsurface_{s-1},\altaltsurface_s$ in $\manifold$. More precisely,
\[
	\altaltsurface_i = \compbodyone_i \cap \compbodytwo_i = \partial_+\compbodyone_i = \partial_+\compbodytwo_i \quad \text{and} \quad \altsurface_j = \compbodytwo_j \cap \compbodyone_{j+1} = \partial_-\compbodytwo_j = \partial_-\compbodyone_{j+1}.
\]
The lower boundaries of $\compbodyone_1$ and $\compbodytwo_s$ constitute the boundary of $\manifold$, i.e., $\partial\manifold = \partial_-\compbodyone_1 \cup \partial_-\compbodytwo_s$.

\begin{example}[Linear splittings from handle decompositions]
\label{ex:linear_splitting}
Assume $\manifold$ is a closed $3$-manifold given via a handle decomposition, i.e., a sequence of handle attachments to build up $\manifold$. 
Consider the first terms of the sequence up to (but not including) the first $2$- or $3$-handle attachment. Let $\compbodyone_1$ be the union of all handles in this subsequence.
In the second step look at all $2$- and $3$-handles following the initial sequence of $0$- and $1$-handles until we reach $0$- or $1$-handles again, and follow the dual construction to obtain another compression body $\compbodytwo_1$. More precisely, define $\partial_{+} \compbodytwo_1 = \partial_{+} \compbodyone_1$, thicken the top boundary into $\partial_{+} \compbodytwo_1 \times [0,1]$, and then attach the given $2$- and $3$-handles along $\partial_{+} \compbodytwo_1 \times \{0\}$.
Iterating this procedure results in a linear splitting of $\manifold$ into compression bodies $(\compbodyone_1, \compbodytwo_1, \ldots , \compbodyone_s, \compbodytwo_s)$.
\end{example}

\paragraph*{Graph splittings and fork complexes.} The decomposition described above exhibits a linear structure.
Here we introduce a more general approach of decomposing a $3$-manifold into compression bodies following a graph structure~\cite{scharlemann2016lecture}. 

The main difference is that now we allow the lower boundary components of a compression body to be glued to lower boundary components of distinct compression bodies.
The top boundary of a compression body, however, is still identified with the top boundary of a single other compression body.
This structure can be represented by a so-called {\em fork complex} (which is essentially a labeled graph) in which the compression bodies of the decomposition are modeled by {\em fork}s.
More precisely, an {\em $n$-fork} is a tree $\fork$ with $n+2$ nodes $V(\fork)=\{g,p,t_1,\ldots,t_n\}$ with $p$ being of degree $n+1$ and all other nodes being leaves.
The nodes $g$, $p$, and the $t_i$ are called the $grip$, the $root$, and the $tine$s of $\fork$, respectively (Figure \ref{fig:forkcomp_examples}(i) shows a $0$- and a $3$-fork).
We think of a fork $\fork = \fork_\compbodyone$ as an abstraction of a compression body $\compbodyone$, such that the grip of $\fork$ corresponds to $\partial_+\compbodyone$, whereas the tines correspond to the connected components of $\partial_-\compbodyone$.

Informally, a {\em fork complex} $\forkcomp$ (representing a given decomposition of a $3$-manifold $\manifold$) is obtained by taking several forks (corresponding to the compression bodies which constitute $\manifold$), and identifying grips with grips, and tines with tines (following the way the boundaries of these compression bodies are glued together). The set of grips and tines which remain unpaired is denoted by $\partial\forkcomp$ (as they correspond to surfaces which constitute the boundary $\partial\manifold$).
The formal relationship between $\forkcomp$ and the underlying $3$-manifold $\manifold$ (possibly with boundary) is described by a map $\rho : (\manifold; \partial \manifold) \to (\forkcomp; \partial\forkcomp)$ which has to satisfy certain natural criteria \cite[Definition 5.1.7]{scharlemann2016lecture}. The pair $(\forkcomp,\rho)$ is called a {\em graph splitting}.
See Figure \ref{fig:forkcomp_examples} for illustrations, and \cite[Section 5.1]{scharlemann2016lecture} for further details.

\begin{figure}
	\centering
	\includegraphics[width=1\textwidth]{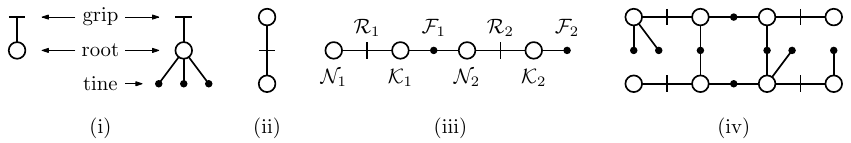}
	\caption{Fork complexes describing Heegaard (ii), linear (iii), and graph splittings (iv)}
	\label{fig:forkcomp_examples}
\end{figure}

\paragraph*{Triangulations.} In this article we typically describe $3$-manifolds by {\em triangulations} (also referred to as {\em generalised}, {\em semi-simplicial}, or {\em singular} triangulations in the literature).\footnote{Triangulations, in the present sense, provide a very efficient way to describe $3$-manifolds: more than $11,000$ topologically distinct $3$-manifolds can be triangulated with $11$ tetrahedra or less \cite{burton2011detecting,matveev2007algorithmic}. This efficiency comes at a price: we allow self-identifications (e.g., gluings of pairs of triangular faces of the same tetrahedron), and thus a triangulation is generally non-regular when seen as a (simplicial) cell-complex. However, this deficiency can be overcome by passing to the first barycentric subdivision. A second barycentric subdivision then yields a simplicial complex. In particular, every triangulation can be turned into a simplicial complex of size at most $24^2 = 576$ times larger than the original triangulation.

The aforementioned theorem of Moise \cite{moise1952affine} tells us that every $3$-manifold has a triangulation.}
That is, a finite collection of abstract tetrahedra, glued together in pairs along their triangular faces (called {\em triangles}). As a result of these {\em face gluings}, many tetrahedral edges (or vertices) are glued together and we refer to the result as a single {\em edge (or vertex) of the triangulation}.
A triangulation $\tri$ describes a closed $3$-manifold if no tetrahedral edge is identified with itself in reverse, and the boundary of a small neighborhood around each vertex is a $2$-sphere. Similarly, $\tri$ describes a $3$-manifold with boundary if, in addition, the boundary of small neighborhoods around some of the vertices are disks.

Given a triangulation $\tri$ of a closed $3$-manifold, its {\em dual graph $\Gamma (\tri)$} (also called the {\em face pairing graph}) is the graph with one node per tetrahedron of $\tri$, and with an arc between two nodes for each face gluing between the corresponding pair of tetrahedra. By construction, the dual graph is a 4-regular multigraph. 
Since every triangulation $\tri$ can be linked to its dual graph $\Gamma (\tri)$ this way, we often attribute properties of $\Gamma (\tri)$ directly to $\tri$.

\subsection{Parameterized complexity and fixed parameter tractability}

There exist various notions and concepts of a refined complexity analysis for theoretically difficult problems. Parameterized complexity, due to Downey and Fellows \cite{downey1999parameterized,downey2013fundamentals}, identifies a parameter on the set of inputs, which is responsible for the hardness of a given problem.

More precisely, for a problem $\problem$ with input set $\inputset$, a parameter is a (computable) function $p\colon \inputset \to \mathbb{N}$. If the parameter $p$ is the output of $\problem$, then $p$ is called the {\em natural parameter}. The problem $\problem$ is said to be {\em fixed parameter tractable for parameter $p$} (or {\em FPT in $p$} for short) if there exists an algorithm which solves $\problem$ for every instance $\instance \in \inputset$ with running time $O(f(p(\instance)) \cdot \operatorname{poly}(n))$, where $n$ is the size of the input $\instance$, and $f\colon \mathbb{N} \to \mathbb{N}$ is arbitrary (computable). By definition, such an algorithm then runs in polynomial time on the set of inputs with bounded $p$. Hence, this identifies, in some sense, $p$ as a potential ``source of hardness'' for~$\problem$, cf.\ the results listed in the \hyperref[sec:intro]{Introduction}.

In computational $3$-manifold topology, a very important set of parameters is the one of topological invariants, i.e., properties which only depend on the topology of a given manifold and are independent of the choice of triangulation (see \cite{maria2019polynomial} for such a result, using the first Betti number as parameter). However, most FPT-results in the field use parameters of the dual graph of a triangulation which greatly depend on the choice of the triangulation: every $3$-manifold admits a triangulation with arbitrarily high graph parameters---for all parameters considered in this article. 
The aim of this work is to link these parameters to topological invariants in the only remaining sense: given a $3$-manifold $\manifold$, find lower bounds for graph parameters of dual graphs of triangulations ranging over all triangulations of~$\manifold$.

\section{Width-type graph parameters}
\label{sec:params}

The theory of parameterized complexity has its sources in graph theory, where many problems which are ${\bf NP}$-hard in general become tractable in polynomial time if one assumes structural restrictions about the possible input graphs.
For instance, several graph theoretical questions have a simple answer if one asks them about trees, or graphs which are similar to trees in some sense.
Width-type parameters make this sense of similarity precise \cite{hlinveny2008width}.
We are particularly interested in the behavior of these parameters and their relationship with each other when considering bounded-degree graphs or, more specifically, dual graphs of $3$-manifold triangulations. (See Appendix~\ref{app:fpt} for computational aspects.)

\paragraph*{Treewidth and pathwidth.} The concepts of treewidth and pathwidth were introduced by Robertson and Seymour in their early papers on graph minors \cite{robertson1983graph, robertson1986graph}, also see the surveys \cite{bodlaender1994tourist,bodlaender2005discovering, bodlaender2008combinatorial}.
Given a graph $G$, its \emph{treewidth} $\tw{G}$ measures how tree-like the graph is.

\begin{definition}[Tree decomposition, treewidth] A \textit{tree decomposition} of a graph $G=(V,E)$ is a tree $\alttree$ with nodes $B_1,\ldots,B_m \subseteq V$, also called {\em bags}, such that
\begin{enumerate}
\itemsep0em 
	\item \label{twpropone} $B_1 \cup \ldots \cup B_m = V$,
	\item \label{twproptwo} if $v \in B_i \cap B_j$ then $v \in B_k$ for all bags $B_k$ of $\alttree$ in the path between $B_i$ and $B_j$, in other words, the bags containing $v$ span a (connected) subtree of $\alttree$,
	\item \label{twpropthree} for every arc $\{u,v\} \in E$, there exists a node $B_i$ such that $\{u,v\} \subseteq B_i$.
\end{enumerate}

The \textit{width} of a tree decomposition equals the size of the largest bag minus one. The \emph{treewidth} $\tw{G}$ is the minimum width among all possible tree decompositions of $G$.
\label{defn-treewidth}
\end{definition}

\begin{definition}[Path decomposition, pathwidth] A \emph{path decomposition} of  a graph $G=(V,E)$ is a tree decomposition for which the tree $\alttree$ is required to be a \emph{path}. The  \emph{pathwidth} $\pw{G}$ of a graph $G$ is the minimum width of any path decomposition of~$G$.
\end{definition}

\paragraph*{Cutwidth.} The \emph{cutwidth} $\cw{G}$ of a graph $G$ is the graph-analogue of the linear width of a manifold (to be discussed in Section~\ref{sec:3mfdParams}). If we order the nodes $\{v_1, \ldots , v_n \} = V(G)$ of $G$ on a line, the set of arcs running from a node $v_i$, $i \leq \ell$, to a node $v_j$, $j > \ell$, is called a cutset $C_{\ell}$ of the ordering. The cutwidth $\cw{G}$ is defined to be the cardinality of the largest cutset, minimized over all linear orderings of $V(G)$.

Cutwidth and pathwidth are closely related: for bounded-degree graphs they are within a constant factor. Let $\Delta(G)$ denote the maximum degree of a node in~$G$.

\begin{theorem}[Bodlaender, Theorems 47 and 49 from \cite{bodlaender1998partial}\footnote{The inequality $\cw{G} \leq \Delta(G)\pw{G}$ seems to be already present in the earlier work of Chung and Seymour \cite{chung1989graphs} on the relation of cutwidth to another parameter called topological bandwith (see Theorem 2 in \cite{chung1989graphs}). Pathwidth plays an intermediate, connecting role there. However, the inequality is phrased and proved explicitly by Bodlaender in \cite{bodlaender1998partial}.}]
\label{thm:parameters}
Given a graph $G$, we have 
\[
	\pw{G} \leq \cw{G} \leq \Delta(G)\pw{G}.
\]
\end{theorem}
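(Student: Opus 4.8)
The plan is to prove the two inequalities separately. The left inequality $\pw{G} \le \cw{G}$ is the easy direction: I would convert a cutwidth-optimal linear layout of $G$ directly into a path decomposition of no larger width. The right inequality $\cw{G} \le \Delta(G)\pw{G}$ goes the other way, and here the one delicate point is getting the precise constant $\Delta(G)$ rather than something like $\Delta(G)\cdot(\pw{G}+1)$; I would deal with this by passing from path decompositions to the \emph{vertex separation number}.

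For $\pw{G} \le \cw{G}$, fix a linear layout $v_1,\dots,v_n$ of $V(G)$ whose largest cutset $C_\ell$ has size $c = \cw{G}$. For $\ell \in \{1,\dots,n-1\}$ let $L_\ell \subseteq \{v_1,\dots,v_\ell\}$ be the set of \emph{left endpoints} of the arcs in $C_\ell$; since distinct arcs of $C_\ell$ may share a left endpoint, $|L_\ell| \le |C_\ell| \le c$. Take the path $B_0 - B_1 - \dots - B_{n-1}$ with $B_0 = \{v_1\}$ and $B_\ell = L_\ell \cup \{v_{\ell+1}\}$ for $\ell \ge 1$. One then checks the three axioms of a tree decomposition: every $v_i$ lies in $B_{i-1}$ (with $B_0$ for $i=1$); every arc $\{v_i,v_j\}$ with $i<j$ lies in $B_{j-1}$, since $v_i \in L_{j-1}$ and $v_j = v_{(j-1)+1} \in B_{j-1}$ (a loop is covered because its unique endpoint lies in some bag); and the bags containing a fixed $v_i$ are those with index in $\{i-1,i,\dots,r_i-1\}$, where $r_i$ is the position of the rightmost neighbour of $v_i$ (just $\{i-1\}$ if $v_i$ has no neighbour to its right), which is an interval of the path. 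Finally $|B_0| = 1$ and $|B_\ell| \le |L_\ell| + 1 \le c+1$ because $v_{\ell+1} \notin L_\ell$, so this path decomposition has width $\le c$, i.e.\ $\pw{G} \le \cw{G}$.

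For $\cw{G} \le \Delta(G)\pw{G}$, I would invoke the theorem (Kirousis--Papadimitriou, Kinnersley; also underlying Bodlaender's argument in \cite{bodlaender1998partial}) that the pathwidth of $G$ equals its \emph{vertex separation number}: there is a linear layout $v_1,\dots,v_n$ of $V(G)$ such that for every $\ell$ the set $S_\ell := \{v_i : i \le \ell \text{ and } v_i \text{ has a neighbour } v_j \text{ with } j > \ell\}$ has $|S_\ell| \le \pw{G}$. Every arc in the cutset $C_\ell$ of this layout has its left endpoint in $S_\ell$, so $C_\ell \subseteq \bigcup_{v \in S_\ell} \operatorname{st}_G(v)$ and hence $|C_\ell| \le \sum_{v \in S_\ell} \deg(v) \le \Delta(G)\,|S_\ell| \le \Delta(G)\pw{G}$; maximising over $\ell$ yields $\cw{G} \le \Delta(G)\pw{G}$. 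The main obstacle is exactly this last step: ordering vertices directly by the index of their first bag in a width-$p$ path decomposition, one naturally bounds each cutset only by the arcs incident to a single bag, i.e.\ by $\Delta(G)(p+1)$, which is off by a factor; the vertex-separation reformulation is what removes the spurious $+1$. (Throughout, loops lie in no cutset and parallel arcs are counted with multiplicity consistently on both sides of each cut, so the multigraph setting raises no extra issues.)
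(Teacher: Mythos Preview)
The paper does not prove this theorem at all; it is quoted as Theorems~47 and~49 of Bodlaender~\cite{bodlaender1998partial} (with a footnote tracing the right-hand inequality back to Chung and Seymour~\cite{chung1989graphs}), so there is no ``paper's own proof'' to compare against.

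Your argument is correct and is in fact the standard one. The construction for $\pw{G}\le\cw{G}$---reading off bags $B_\ell = L_\ell \cup \{v_{\ell+1}\}$ from the left endpoints of the $\ell$-th cutset---is exactly how Bodlaender proves his Theorem~47, up to an index shift. For $\cw{G}\le\Delta(G)\pw{G}$ you correctly identify the key step: one must pass through the vertex separation number (equal to pathwidth by Kinnersley~\cite{kinnersley1992vertex}) to avoid the spurious $+1$, and then bound each cutset by the stars of the at most $\pw{G}$ vertices in $S_\ell$. This is precisely Bodlaender's route to his Theorem~49. Your remarks on loops and parallel arcs are also in order for the multigraph setting used in the paper.
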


\paragraph*{Congestion.} In \cite{bienstock1989graph} Bienstock introduced {\em congestion}, a generalization of cutwidth, which is a quantity related to treewidth in a similar way as cutwidth to pathwidth (compare Theorems \ref{thm:parameters} and \ref{thm:tw-cng-tw}).

Let us consider two graphs $G$ and $\tree$, called the {\em guest} and the {\em host}, respectively. An {\em embedding} $\mathcal{E} = (\iota,\rho)$ of $G$ into $\tree$ consists of an injective mapping $\iota\colon V(G)\rightarrow V(\tree)$ together with a routing $\rho$ that assigns to each arc $\{u,v\} \in E(G)$ a path in $\tree$ with endpoints  $\iota(u)$ and $\iota(v)$.
If $e\in E(G)$ and $h \in E(\tree)$ is on the path $\rho(e)$, then we say that ``$e$ is running parallel to $h$''.
The congestion of $G$ with respect to an embedding $\mathcal{E}$ of $G$ into a host graph $\tree$, denoted as $\operatorname{cng}_{\tree,\mathcal{E}}(G)$, is defined to be the maximal number of times an arc of $\tree$ is used in the routing of arcs of~$G$.
We also say that $\tree$ is {\em realizing} congestion $\operatorname{cng}_{\tree,\mathcal{E}}(G)$.
Several notions of congestion can be obtained by minimizing $\operatorname{cng}_{\tree,\mathcal{E}}(G)$ over various families of host graphs and embeddings (see, e.g., \cite{ostrovskii2004minimal}). Here we work with the following.

\begin{definition}[Congestion\footnote{It is important to note that congestion in the sense of Definition \ref{defn:cng} is also known as \emph{carving width}, a term which was coined by Robertson and Seymour in \cite{seymour1994call}. However, the usual abbreviation for carving width is `cw' which clashes with that of the cutwidth. Therefore we stick to the name `congestion' and the abbreviation `cng' to avoid this potential confusion in notation.}] 
Let $T_{\{1,3\}}$ be the set of unrooted binary trees.\footnote{An \textit{unrooted binary tree} is a tree in which each node is incident to either one or three arcs.} The {\em congestion} $\cng{G}$ of a graph $G$ is defined as
\[
	\cng{G}=\min\{\operatorname{cng}_{\tree,\mathcal{E}}(G):\tree\in T_{\{1,3\}}, ~\mathcal{E}=(\iota,\rho)~\text{with}~\iota\colon V(G)\rightarrow L(\tree)~\text{bijection}\},
\]
where $L(\tree)$ denotes the set of leaves of $\tree$.

In other words, we minimize $\operatorname{cng}_{\tree,\mathcal{E}}(G)$ when the host graph $\tree$ is an unrooted binary tree and the mapping $\iota$ maps the nodes of $G$ bijectively onto the leaves of $\tree$. The routing $\rho$ is uniquely determined as the host graph is a tree. See Figure \ref{fig:routing}.
\label{defn:cng}
\end{definition} 

\begin{figure}[ht]
	\centerline{\includegraphics[scale=1.2]{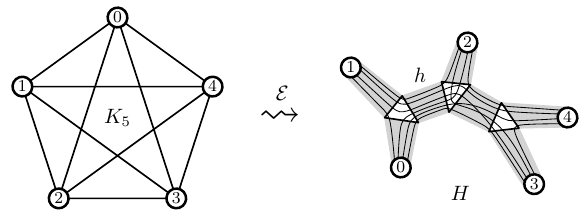}}
	\caption{The complete graph $K_5$ (guest) routed along an unrooted binary tree $\tree$ (host). We have $\operatorname{cng}_{\tree,\mathcal{E}}(K_5)=6$ which is witnessed by $h$: six arcs of $K_5$ are running parallel to $h$}
	\label{fig:routing} 
\end{figure}

\begin{theorem}[Bienstock, p.\ 108--111 of \cite{bienstock1990embedding}] Given a graph $G$ with maximum degree $\Delta(G)$, we have\footnote{Only the right-hand side inequality of Theorem \ref{thm:tw-cng-tw}, $\cng{G} \leq (\tw{G}+1)\Delta(G)$, is  formulated explicitly in \cite{bienstock1990embedding} as Theorem 1 on p.\ 111, whereas the left-hand side inequality is stated ``inline'' in the preceding paragraphs on the same page.}
\[
	\max\left\{\textstyle \frac{2}{3}(\tw{G}+1),\Delta(G)\right\} \leq \cng{G} \leq \Delta(G)(\tw{G}+1).
\]
\label{thm:tw-cng-tw}
\end{theorem}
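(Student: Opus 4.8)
The statement packages three inequalities: $\Delta(G)\le\cng{G}$, $\tfrac23(\tw{G}+1)\le\cng{G}$, and $\cng{G}\le\Delta(G)(\tw{G}+1)$. The first is essentially immediate. Given any unrooted binary tree $\tree$ and any embedding $\mathcal{E}=(\iota,\rho)$ with $\iota\colon V(G)\to L(\tree)$ a bijection, take a node $v$ of maximum degree; every arc of $G$ incident to $v$ is routed by $\rho$ along a path starting at the leaf $\iota(v)$, hence passing through the unique arc of $\tree$ incident to $\iota(v)$. Thus $\operatorname{cng}_{\tree,\mathcal{E}}(G)\ge\deg(v)=\Delta(G)$, and minimizing over $(\tree,\mathcal{E})$ yields $\cng{G}\ge\Delta(G)$.

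For the upper bound $\cng{G}\le\Delta(G)(\tw{G}+1)$ the plan is to convert a tree decomposition into a carving. I would start with a tree decomposition $(\alttree,\{B_i\})$ of width $t=\tw{G}$ and, after routine surgery, assume $\alttree$ is cubic. For each node $v\in V(G)$ pick a \emph{home} node $\operatorname{home}(v)$ among the (connected family of) nodes of $\alttree$ whose bags contain $v$. Build the host $\tree$ from $\alttree$ by grafting onto each node $i$ a small binary ``broom'' whose leaves are exactly $\{v:\operatorname{home}(v)=i\}$ (at most $|B_i|\le t+1$ leaves), with a bit of local surgery so that $\tree$ stays cubic and $L(\tree)=V(G)$. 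Route $\{u,v\}\in E(G)$ as the unique path in $\tree$ from $\iota(u)$ to $\iota(v)$: up $u$'s broom to $\operatorname{home}(u)$, along $\alttree$ to $\operatorname{home}(v)$, and down $v$'s broom. The key point is that the $\alttree$-segment stays inside $\alttree_u\cup\alttree_v$, where $\alttree_w$ is the connected set of nodes whose bag contains $w$; indeed $\alttree_u\cap\alttree_v$ contains a bag with both $u$ and $v$ (as $\{u,v\}\in E(G)$), so $\alttree_u\cup\alttree_v$ is again a subtree, and every one of its arcs lies entirely in $\alttree_u$ or entirely in $\alttree_v$. Hence, for an arc of $\tree$ lying along an arc $\{i,j\}$ of $\alttree$, every $G$-arc routed across it has an endpoint in $B_i\cap B_j$, and each such endpoint is incident to at most $\Delta(G)$ arcs of $G$, giving at most $\Delta(G)\,|B_i\cap B_j|\le\Delta(G)(t+1)$ arcs across it; for an arc inside the broom at $i$, it separates off a set $U\subseteq\{v:\operatorname{home}(v)=i\}$ with $|U|\le t+1$, and the $G$-arcs across it are incident to $U$, so there are at most $\Delta(G)\,|U|\le\Delta(G)(t+1)$ of them. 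This gives $\cng{G}\le\Delta(G)(\tw{G}+1)$.

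For the lower bound $\tfrac23(\tw{G}+1)\le\cng{G}$, equivalently $\tw{G}+1\le\tfrac32\cng{G}$, the plan is the reverse direction: from an optimal carving $(\tree,\mathcal{E})$ with $\operatorname{cng}_{\tree,\mathcal{E}}(G)=\cng{G}=:c$, build a tree decomposition of width at most $\tfrac32 c-1$, using (a minor of) the carving tree $\tree$ as the decomposition tree. The numerical heart is the observation that each internal (cubic) node $x$ of $\tree$ has at most $\tfrac32 c$ arcs of $G$ routed through it: deleting an arc $h$ of $\tree$ splits $V(G)=L(\tree)$ into two parts and determines a cut $C_h\subseteq E(G)$ with $|C_h|\le c$; an arc of $G$ passing through $x$ uses exactly two of the three $\tree$-arcs $h_1,h_2,h_3$ at $x$, so the number of such arcs is $\tfrac12(|C_{h_1}|+|C_{h_2}|+|C_{h_3}|)\le\tfrac32 c$. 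One then wants to form the bag $B_x$ from a single, carefully chosen endpoint of each $G$-arc through $x$, so that $|B_x|\le\tfrac32 c$ while the bags containing a fixed node still induce a subtree of $\tree$ and every arc of $G$ lies inside some bag.

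The last requirement is where the difficulty lies: naive recipes (all endpoints of all routed arcs, or all ``boundary'' nodes of the three parts meeting at $x$) produce bags of size up to $3c$ or $2c$ and miss the target, so the endpoint selection must be adaptive. A workable plan is to root $\tree$ at a leaf and, for each $G$-arc routed through $x$, put into $B_x$ the endpoint whose leaf is separated from the root by $x$ — which determines a unique endpoint precisely when the topmost node of that arc's routing path lies strictly above $x$ — and to handle the remaining arcs, those whose two endpoint-leaves lie in the two subtrees of $\tree$ hanging below $x$, by a tie-break, subdividing $x$ when there are too many of them (a situation that forces the corresponding portion of $G$ to be essentially a separate piece, which can then be treated recursively). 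Verifying that the bags so defined have size at most $\tfrac32 c$ and satisfy the connectivity axiom of Definition~\ref{defn-treewidth} is the technical crux; alternatively, one can route the lower bound through branchwidth, combining $\operatorname{bw}(G)\le\cng{G}$ (a carving of width $c$ yields a branch decomposition of width at most $c$) with the classical inequality $\tw{G}+1\le\tfrac32\operatorname{bw}(G)$ of Robertson and Seymour.
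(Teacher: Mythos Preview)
The paper does not give its own proof of this theorem: it is quoted verbatim from Bienstock~\cite{bienstock1990embedding} (see the attribution in the theorem heading and the accompanying footnote), and is used as a black box when deriving Theorem~\ref{thm:heeg_tw} from Theorem~\ref{thm:graph_width_cng}. There is therefore nothing to compare your proposal against in the paper itself.

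Assessed on its own, your sketch is sound. The inequality $\Delta(G)\le\cng{G}$ is exactly as easy as you say. For the upper bound, your construction (make the decomposition tree cubic, choose a home bag for each node, graft caterpillars, and bound the load on an $\alttree$-arc $\{i,j\}$ via $|B_i\cap B_j|$) is the standard argument and is correct; the one place to be a bit more explicit is the ``local surgery'' step, since splitting a degree-$4$ node into two degree-$3$ nodes produces a new host arc that is neither an $\alttree$-arc nor a broom arc. The same endpoint-in-$B_i$ reasoning still applies there, but it is worth saying so.

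For the lower bound, you are right that naively putting both endpoints of every routed edge into $B_x$ only yields $\tw{G}+1\le 2\cng{G}$; hitting the factor $\tfrac32$ genuinely requires more. Your branchwidth detour is the clean way to close the gap: a carving of width $c$ does yield a branch decomposition of width at most $c$ (e.g.\ replace each leaf $\iota(v)$ of the carving tree by a small binary tree whose leaves are the edges incident to $v$, giving a branch decomposition of the subdivision $G'$ of $G$; one checks that every separation has order at most $c$, and $\operatorname{bw}(G')=\operatorname{bw}(G)$), and then the Robertson--Seymour inequality $\tw{G}+1\le\tfrac32\operatorname{bw}(G)$ finishes it. Your ``direct'' rooted-bag approach can also be made to work, but as you note, the tie-breaking at nodes $x$ that are the least common ancestor of both endpoints is delicate, and I would recommend committing to the branchwidth route rather than leaving that case half-sketched.
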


\begin{example}[The Petersen graph]
\label{ex:petersen}
One of the most widely used examples in graph theory is the \textit{Petersen graph}, denoted $\petersen$, see Figure \ref{fig:petersen}(i). Although it is not a dual graph of a $3$-manifold triangulation (since it is not 4-regular), it turns out to be helpful for comparing the graph parameters considered in this article.

\begin{itemize}
	\item $\boldsymbol{\cw{\petersen}=6}$. Notice that for any $S \subset V=V(\petersen)$ of cardinality four there are at least six arcs running between $S$ and $V\setminus S$. That is, on the one hand, $\cw{\petersen} \geq 6$. On the other hand, it is easily verified that in the linear ordering $0 < 1 < 2 < \ldots < 9$ the maximal cutset has size six.
	\item $\boldsymbol{\pw{\petersen}=5}$. A minimal-width path decomposition (which we computed using the module `Vertex separation' of {\em SageMath} \cite{sage}) is the following.
	
	\medskip

	\centerline{\includegraphics[scale=1.09]{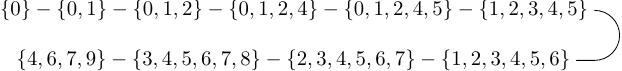}}

	\item $\boldsymbol{\tw{\petersen}=4}$. An optimal tree decomposition (computed using {\em SageMath} \cite{sage}) is shown in Figure \ref{fig:petersen}(ii).
	\item $\boldsymbol{\operatorname{cng}(\petersen)=5}$. Every arc $e$ of a host tree $\tree$ specifies a cut in $\petersen$: by deleting $e$, the leaves of the two components of $\tree\setminus e$ correspond to a partition of $V(\petersen)$.
It is easy to see that there is always a cut $S \cup R = V$ with $\{\# S, \# R\} \in \{\{3,7\},\{4,6\},\{5,5\}\}$, and that every such cut contains at least five arcs, hence $\operatorname{cng}(\petersen)\geq 5$. The reverse inequality is proven through the example in Figure \ref{fig:petersen}(iii).
\end{itemize}
\end{example}

\begin{figure}[ht]
	\centerline{\includegraphics[scale=1.3]{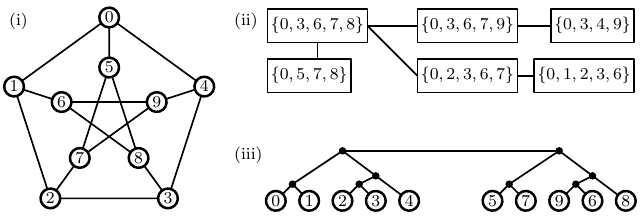}}
	\caption{The Petersen graph (i), a tree decomposition realizing minimal treewidth (ii), and an unrooted binary tree realizing minimal congestion (iii)}
	\label{fig:petersen}
\end{figure}

\section{Width-type parameters for 3-manifolds}
\label{sec:3mfdParams}

The Heegaard genus (defined in Section \ref{ssec:3mfds}) is a first example for a width-type parameter of a 3-manifold: the larger the Heegaard genus, the ``wider'' the manifold.
Here we consider two subsequent refinements, the linear width and the more general graph width, whose properties are essential for proving our results in Sections \ref{sec:lwpw} and \ref{sec:toptw}, respectively.

\paragraph*{Linear width.} In \cite{scharlemann1992thin} Scharlemann and Thompson extend the concept of thin position from knot theory \cite{gabai1987thinpositon} to $3$-manifolds and define the linear width of a manifold.\footnote{Also see \cite{hugh2007thinposition} and the textbook \cite{scharlemann2016lecture} for an introduction to generalized Heegaard splittings and to thin position, and for a survey of recent results.}
For this they look at linear splittings, i.e., linear decompositions of a manifold into compression bodies. This setup is explained in Section~\ref{ssec:3mfds}.

Given a linear splitting of a $3$-manifold $\manifold$ into $2s$ compression bodies with top boundary surfaces $\altaltsurface_i$, $ 1 \leq i \leq s$, consider the multiset $\{ c(\altaltsurface_i) : 1 \leq i \leq s \}$, where $c(\surface) = \max \{ 0, 2g(\surface) - 1 \}$ for a connected surface $\surface$, and $c(\surface) = \sum_j c(S_j)$ for a surface $\surface$ with connected components $S_j$. This multiset $\{ c(\altaltsurface_i) : 1 \leq i \leq s \}$, when arranged in non-increasing order, is called the {\em width} of the (linear) splitting. We here define the {\em linear width of a manifold} $\manifold$, denoted by $\CW{\manifold}$, to be the maximum entry in a lexicographically smallest width ranging over all linear splittings of $\manifold$.\footnote{For our purposes it is most convenient to define the linear width to be a single integer rather than a multiset of integers. We thus deviate at this point from the definition of linear width in \cite{scharlemann1992thin}.} A manifold $\manifold$ together with a linear spitting of lexicographically smallest width is said to be in {\em thin position}.

A guiding idea behind thin position is to attach $2$- and $3$-handles as early as possible and $0$- and $1$-handles as late as possible in order to obtain a decomposition for which the ``topological complexity'' of the top bounding surfaces is minimized.

\begin{theorem}[Scharlemann--Thompson, Rule 5 from \cite{scharlemann1992thin}]
\label{thm:incompressible}
Let $\manifold$ be a $3$-manifold together with a linear splitting into compression bodies $(\compbodyone_1, \compbodytwo_1, \ldots , \compbodyone_s, \compbodytwo_s)$ in thin position, and let $\{\altsurface_i \subset \manifold : 1 \leq i \leq s-1\}$, be the set of lower boundary surfaces $\altsurface_i = \partial_-\compbodytwo_i = \partial_-\compbodyone_{i+1}$. Then every connected component of every surface $\altsurface_i$ is incompressible.
\end{theorem}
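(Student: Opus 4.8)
The plan is to argue by contradiction, following the strategy of Scharlemann and Thompson \cite{scharlemann1992thin} (see also \cite{scharlemann2016lecture}): suppose some connected component $F$ of some thin surface $\altsurface_i$ is compressible and fix a compressing disk $D\subset\manifold$ for it. From $D$ I want to build a modification of the given linear splitting whose width is lexicographically strictly smaller, contradicting the hypothesis that the splitting is in thin position. The first step is to \emph{localize} $D$. Since $\partial D\subset F\subset\altsurface_i$ while the interior of $D$ is disjoint from $\altsurface_i$, repeated innermost-disk surgeries along the circles of $\operatorname{int} D\cap\bigl(\bigcup_j\altsurface_j\bigr)$ make the interior of $D$ disjoint from \emph{all} thin surfaces: a circle of intersection that is inessential on the thin surface carrying it can be removed by an isotopy of $D$, while an essential one cuts off a compressing disk of that thin surface with strictly fewer such intersections; so if $D$ is chosen to minimize the total number of these intersections over all compressing disks of all components of all thin surfaces, there are none. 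The connected disk $D$ then lies in the closure of a single piece $W=\compbodyone_{i+1}\cup_{\altaltsurface_{i+1}}\compbodytwo_{i+1}$ of the splitting — after possibly replacing $W$ by the mirror piece $\compbodyone_i\cup_{\altaltsurface_i}\compbodytwo_i$ lying on the other side of $\altsurface_i$ — so that $D$ is a compressing disk \emph{inside} $W$ for the lower boundary surface $\altsurface_i=\partial_-\compbodyone_{i+1}$.

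Next I would produce a compressing disk for the adjacent \emph{thick} surface $\altaltsurface_{i+1}$. The lower boundary of a compression body is always incompressible in that compression body: an embedded disk with boundary on $\partial_-$ can be isotoped off the cocores of the $1$-handles and hence into the collar $\partial_-\compbodyone_{i+1}\times[0,1]$, where its boundary bounds a subdisk of $\partial_-$. Consequently $D$ must meet $\altaltsurface_{i+1}$; choosing $D$ so that $|D\cap\altaltsurface_{i+1}|$ is minimal and taking a circle $c$ of this intersection that is innermost on $D$, one obtains a proper subdisk $D_0\subset D$ with interior disjoint from $\altaltsurface_{i+1}$, hence contained in $\compbodyone_{i+1}$ or in $\compbodytwo_{i+1}$. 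If $c$ were inessential on $\altaltsurface_{i+1}$, a surgery would lower $|D\cap\altaltsurface_{i+1}|$; hence $c$ is essential and $D_0$ is a genuine compressing disk for the thick surface $\altaltsurface_{i+1}$, together with the annular remnant $D\setminus D_0$ of $D$ that joins $c$ back to $\partial D\subset\altsurface_i$.

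Finally comes the \emph{thinning move}. Using $D_0$ and the annular remnant of $D$, one performs a weak reduction of the Heegaard-type surface $\altaltsurface_{i+1}$ inside $W$ — sliding or cancelling the handle of $W$ dual to $D_0$ and re-reading the resulting handle decomposition — and thereby obtains a new linear splitting of $\manifold$ in which the thick surface $\altaltsurface_{i+1}$ has been replaced by surface(s) of strictly smaller total complexity $c(\,\cdot\,)$, with no other thick surface affected. Replacing one entry of the width multiset by two smaller ones makes the width lexicographically smaller, contradicting thin position; hence no compressing disk $D$ exists and every connected component of every $\altsurface_i$ is incompressible. (Sphere components of a thin surface, for which ``compressible'' is not even defined, are eliminated beforehand by consolidation moves.)

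I expect this last step to be the main obstacle. A one-sided compression of a Heegaard-type surface does not by itself produce a thinner splitting, so one has to exploit the full disk $D$ — not just $D_0$ — to extract a genuine weak reduction of $\altaltsurface_{i+1}$ (disjoint compressing disks on its two sides), and then verify carefully that the width \emph{strictly} decreases rather than merely staying constant and that the consolidation and relabelling bookkeeping goes through. For these details I would follow the original treatment of Scharlemann and Thompson \cite{scharlemann1992thin} and the exposition in \cite{scharlemann2016lecture}.
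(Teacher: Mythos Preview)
The paper does not actually prove this theorem: it is stated as a result of Scharlemann and Thompson and cited from \cite{scharlemann1992thin}, with no argument supplied (only the subsequent corollary, Theorem~\ref{thm:st}, gets a proof sketch, and that sketch uses Theorem~\ref{thm:incompressible} as a black box). So there is nothing in the paper to compare your proposal against.

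That said, your outline is a faithful sketch of the original Scharlemann--Thompson strategy: localize a putative compressing disk $D$ to a single thick region via innermost-circle arguments on the thin surfaces, use incompressibility of $\partial_-$ inside a compression body to force $D$ through the adjacent thick surface $\altaltsurface_{i+1}$, extract an innermost subdisk $D_0$ compressing $\altaltsurface_{i+1}$, and then argue that the splitting can be thinned. You are also right to flag the final step as the crux. A one-sided compression of $\altaltsurface_{i+1}$ alone does not suffice; one must promote the situation to a genuine weak reduction (disjoint compressing disks on opposite sides of $\altaltsurface_{i+1}$) and then carefully track the effect on the width multiset to see a strict lexicographic decrease. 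Your proposal defers precisely this bookkeeping to \cite{scharlemann1992thin} and \cite{scharlemann2016lecture}, which is exactly what the present paper does as well---it treats the whole theorem as a citation.
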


Theorem~\ref{thm:incompressible} has the following consequence.

\begin{theorem}[Scharlemann--Thompson \cite{scharlemann1992thin}]
\label{thm:st}
Let $\manifold$ be irreducible, non-Haken. Then the smallest width linear splitting of $\manifold$ into compression bodies 
is a Heegaard splitting of minimal genus $\mathfrak{g}(\manifold)$. In particular, the 
linear width of $\manifold$ is given by $\CW{\manifold}=2\mathfrak{g}(\manifold)-1$.
\end{theorem}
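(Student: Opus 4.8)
The plan is to deduce Theorem~\ref{thm:st} from Theorem~\ref{thm:incompressible} by observing that an irreducible, non-Haken $3$-manifold leaves no room for the thin surfaces of a linear splitting. First I would fix a linear splitting of $\manifold$ in thin position,
\[
	\manifold = \left(\compbodyone_1 \cup_{\altaltsurface_1} \compbodytwo_1\right) \cup_{\altsurface_1} \cdots \cup_{\altsurface_{s-1}} \left(\compbodyone_s \cup_{\altaltsurface_s} \compbodytwo_s\right)
\]
(one exists, by Moise's theorem \cite{moise} together with well-foundedness of the lexicographic order on widths). Each thin surface $\altsurface_i = \compbodytwo_i \cap \compbodyone_{i+1}$ is a union of boundary components of compression bodies embedded in the orientable manifold $\manifold$, hence closed, orientable and $2$-sided. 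By Theorem~\ref{thm:incompressible}, every connected component $S$ of every $\altsurface_i$ is incompressible. If $g(S) \geq 1$, then $S$ is a $2$-sided incompressible surface in the irreducible manifold $\manifold$, which is $P^2$-irreducible since it is orientable; hence $\manifold$ would be Haken, a contradiction. If $S$ is a $2$-sphere, then by irreducibility it bounds a $3$-ball in $\manifold$, so it is compressible in the sense fixed in Section~\ref{ssec:3mfds}, again a contradiction. Therefore every $\altsurface_i$ is empty.

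Next I would note that, since consecutive stages of the splitting are glued precisely along the surfaces $\altsurface_i$, all of them being empty forces $s=1$ (otherwise $\manifold$ would be disconnected). Thus $\manifold = \compbodyone_1 \cup_{\altaltsurface_1} \compbodytwo_1$, which is by definition a Heegaard splitting; and because $\partial\manifold = \emptyset$ we have $\partial_-\compbodyone_1 = \partial_-\compbodytwo_1 = \emptyset$, so $\compbodyone_1$ and $\compbodytwo_1$ are handlebodies with common boundary surface $\altaltsurface_1$.

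Finally I would pin down the genus. The width of this $s=1$ splitting is the one-entry multiset $\{c(\altaltsurface_1)\}$ with $c(\altaltsurface_1) = \max\{0,\, 2g(\altaltsurface_1)-1\}$, and conversely every Heegaard splitting of $\manifold$ is a linear splitting with $s=1$ whose width has exactly this shape. Since thin position realizes the lexicographically smallest width over \emph{all} linear splittings --- in particular over all the $s=1$ ones, i.e.\ over all Heegaard splittings --- the integer $c(\altaltsurface_1)$ is as small as possible, so $g(\altaltsurface_1) = \mathfrak{g}(\manifold)$; and $\CW{\manifold}$, being the largest entry of the lex-smallest width, equals $c(\altaltsurface_1) = 2\mathfrak{g}(\manifold)-1$ (the formula is to be read for $\manifold \neq \mathbb{S}^3$; when $\mathfrak{g}(\manifold)=0$ one gets $\CW{\manifold}=0$ instead). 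I do not expect a serious obstacle downstream of Theorem~\ref{thm:incompressible}: the one genuinely delicate point --- ruling out ``trivial'' thin $2$-sphere components of a thin-position splitting via an untelescoping/amalgamation argument --- is already absorbed into that theorem and into the convention from Section~\ref{ssec:3mfds} that a $2$-sphere bounding a ball counts as compressible, so everything above is essentially bookkeeping once Theorem~\ref{thm:incompressible} is in hand.
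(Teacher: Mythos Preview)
Your proposal is correct and follows essentially the same route as the paper's proof sketch: invoke Theorem~\ref{thm:incompressible} to make the thin surfaces $\altsurface_i$ incompressible and $2$-sided, then use irreducibility plus non-Hakenness to force $s=1$. You supply more detail than the paper does --- separating the $2$-sphere case explicitly, arguing connectedness forces $s=1$ once the $\altsurface_i$ are empty, and spelling out why the resulting Heegaard splitting has minimal genus --- but the core argument is the same.
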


\begin{proof}[Proof (sketch)]
Let $(\compbodyone_1, \compbodytwo_1, \compbodyone_2, \compbodytwo_2, \ldots , \compbodyone_s, \compbodytwo_s)$ be a thin decomposition of $\manifold$. By Theorem~\ref{thm:incompressible}, all surface components of all bounding surfaces $\altsurface_i$, $1 \leq i < s$, must be incompressible. Moreover, all $\altsurface_i$ are separating and thus they are $2$-sided. However, irreducible, non-Haken $3$-manifolds do not have incompressible $2$-sided surfaces. Hence $s=1$ and therefore the decomposition $(\compbodyone_1, \compbodytwo_1)$ must be a Heegaard splitting of $\manifold$. 
\end{proof}

\paragraph*{Graph width.} In \cite{scharlemann2016lecture} Scharlemann, Schultens and Saito further refine the concept of thin position to graph splittings of $3$-manifolds, see Section~\ref{ssec:3mfds}. In particular, given a manifold $\manifold$ together with a graph splitting defined by a fork complex $\forkcomp$, let $\{\altaltsurface_j:j~\text{grip~of}~\forkcomp\}$ be the set of top boundary surfaces of the decomposition. Then the width of the graph splitting coming from $\forkcomp$ is defined as the multiset $\{g(\altaltsurface_j):j~\text{grip~of}~\forkcomp\}$ with non-increasing order. Similar to the construction of linear width, the {\em graph width $\graph{\manifold}$} of $\manifold$ is defined to be the largest entry of the lexicographically smallest width ranging over all graph splittings of $\manifold$. A graph splitting of $\manifold$ which has lexicographically smallest width is said to be {\em thin}.

\begin{theorem}[Scharlemann--Schultens--Saito, \cite{scharlemann2016lecture}  Corollary 5.2.5]
\label{thm:graphincompressible}
Let $\manifold$ be a $3$-manifold together with a thin graph splitting defined by fork a complex $\forkcomp$, and let $\{\altsurface_i \subset \manifold : i~\text{tine~of}~\forkcomp\}$ be the set of lower boundary surfaces as defined in Section~\ref{ssec:3mfds}. Then every connected component of every lower boundary surface $\altsurface_i$ is incompressible.
\end{theorem}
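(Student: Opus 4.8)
The plan is to argue by contradiction, adapting the thinning argument of Scharlemann and Thompson that underlies Theorem~\ref{thm:incompressible} from the linear setting to the fork-complex setting. Suppose the graph splitting $(\forkcomp,\rho)$ is thin but that some connected component $F$ of some lower boundary surface $\altsurface_i$ (i.e., the tine-surface associated with a tine $i$ of $\forkcomp$) is compressible in $\manifold$, with compressing disk $D$. Since $\manifold$ is orientable and the lower boundary surfaces of compression bodies are orientable, $F$ is two-sided, so $D$ lies on one side of $F$; on that side sits one of the two compression bodies $W$ of the decomposition meeting along $F$, and $F\subseteq\partial_-W$. Write $\altaltsurface=\partial_+W$ for the top (grip) surface of $W$, and let $W'$ be the compression body glued to $W$ along $\altaltsurface$, so that $W\cup_{\altaltsurface}W'$ is a Heegaard-type piece of the splitting.

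First I would push $D$ down to the thick surface $\altaltsurface$. Because the lower boundary of a compression body is incompressible in it, $D$ is not contained in $W$, so after an isotopy minimizing $|D\cap\altaltsurface|$ and the usual innermost-disk/outermost-arc surgeries (using incompressibility of $\partial_-W$ and $\partial_-W'$ and irreducibility of $\manifold$ to discard inessential pieces), one extracts from $D$ a compressing disk $D_+$ for $\altaltsurface$ on the $W$-side together with a compressing disk $D_-$ for $\altaltsurface$ on the $W'$-side with $\partial D_+\cap\partial D_-=\emptyset$. In other words, compressibility of the thin surface $F$ forces a \emph{weak reduction} of the piece $W\cup_{\altaltsurface}W'$.

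Next I would untelescope along this weak reduction. Compressing $\altaltsurface$ along $D_+$ and $D_-$ simultaneously replaces $W\cup_{\altaltsurface}W'$ by two Heegaard-type pieces joined along a new lower surface, whose top surfaces $\altaltsurface_1,\altaltsurface_2$ satisfy $g(\altaltsurface_1),g(\altaltsurface_2)<g(\altaltsurface)$ since $\partial D_\pm$ are essential on $\altaltsurface$. In fork-complex language this means splitting the fork modelling $W$ (and the one modelling $W'$) into two forks and re-identifying grips and tines accordingly, yielding a new fork complex $\forkcomp'$ together with a map $\rho'$ satisfying the axioms of \cite[Definition~5.1.7]{scharlemann2016lecture}; thus $(\forkcomp',\rho')$ is again a graph splitting of $\manifold$. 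In its width multiset the entry $g(\altaltsurface)$ is replaced by the strictly smaller entries $g(\altaltsurface_1)$ and $g(\altaltsurface_2)$, while every other entry is unchanged, so the width of $\forkcomp'$ is lexicographically strictly smaller than that of $\forkcomp$ --- contradicting thinness. Hence every component of every lower boundary surface is incompressible.

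The hard part will be the middle step: turning the compressing disk $D$ for the thin surface into a genuine weak reduction $(D_+,D_-)$ with disjoint boundaries, which needs a case analysis according to whether the relevant compressions are separating, non-separating, or trivial, plus an argument that the disk-cleanup terminates; and then verifying that the fork-complex surgery really preserves the structural axioms of \cite{scharlemann2016lecture} (in particular when $W$ and $W'$ are glued along several tines, or a tine of a fork is glued to another tine of the same fork). This bookkeeping is exactly what is carried out in \cite[Chapter~5]{scharlemann2016lecture}, where the statement appears as Corollary~5.2.5, so in practice one invokes their machinery rather than redoing it. A self-contained alternative is to mimic Scharlemann--Thompson's proof of the linear case of Theorem~\ref{thm:incompressible} verbatim: every move there is local to a single thick surface and its two neighbouring compression bodies, so only the combinatorics of which surfaces get relabelled changes when the underlying path is replaced by the tree or graph governing $\forkcomp$.
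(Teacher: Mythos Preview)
The paper does not prove this theorem at all; it is simply stated with attribution to \cite[Corollary~5.2.5]{scharlemann2016lecture} and then used as a black box (just as Theorem~\ref{thm:incompressible} is quoted without proof). So there is nothing in the paper to compare your argument against: the paper's ``proof'' is the citation, and you yourself arrive at the same conclusion near the end of your write-up when you say ``in practice one invokes their machinery rather than redoing it.''

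That said, as a sketch of the argument behind the cited result, your outline follows the standard Casson--Gordon/Scharlemann--Thompson pattern and is broadly correct. Two places would need attention if you actually wanted to make it self-contained. First, you invoke irreducibility of $\manifold$ during the innermost-disk cleanup, but the theorem as stated has no irreducibility hypothesis; in the full treatment one must allow sphere components and show they can be absorbed into the thinning process rather than assumed away. Second, your localisation of $D$ to the single piece $W\cup_{\altaltsurface}W'$ is not automatic in a graph splitting: a priori the disk can cross several thin surfaces and visit many compression bodies of the decomposition, and reducing to a situation where the weak reduction happens at a single grip is part of the bookkeeping in \cite[Chapter~5]{scharlemann2016lecture}. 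You correctly flag both of these as ``the hard part'' and defer them to the reference, so your proposal is honest about where the real work lies.
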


Similarly to the linear width case, Theorem~\ref{thm:graphincompressible} implies that a thin graph splitting of an irreducible, non-Haken $3$-manifold must be a Heegaard splitting. In particular, $\graph{\manifold} = \mathfrak{g} (\manifold)$ for any given irreducible, non-Haken $3$-manifold $\manifold$.

\paragraph*{Non-Haken 3-manifolds of large genus.} The next theorem provides an infinite family of 3-manifolds for which we can apply our results established in the subsequent sections.

\begin{theorem}[Agol, Theorem 3.2 in \cite{agol2003small}]
\label{thm:a}
There exist orientable, closed, irreducible, and non-Haken $3$-manifolds of arbitrarily large Heegaard genus. 
\end{theorem}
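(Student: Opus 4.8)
The plan is to produce a single construction that simultaneously forces all four conditions on an infinite family $\{\manifold_n\}$ --- orientable, closed, irreducible, non-Haken --- together with $\mathfrak{g}(\manifold_n)\to\infty$. The natural vehicle is hyperbolic Dehn surgery. I would fix a suitable cusped hyperbolic $3$-manifold $N$ (or a carefully chosen family $N_n$), take it orientable, and fill its cusp(s) along a family of slopes $\gamma_n$ whose ``length'' in the cusp grows, setting $\manifold_n = N(\gamma_n)$. Orientability is inherited and closedness is automatic; hyperbolicity of $\manifold_n$ --- hence irreducibility, $P^2$-irreducibility and atoroidality --- for all sufficiently long $\gamma_n$ is Thurston's hyperbolic Dehn surgery theorem. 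So the first three required properties reduce to choosing $N$ orientable and the filling slopes long.

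The second step is non-Hakenness. For this I would take $N$ to be \emph{small}, i.e.\ to contain no closed essential surface. Then a closed essential surface in a filling $N(\gamma)$ can be isotoped to meet the filling solid torus in meridian disks; if it misses the solid torus entirely it lies in $N$, contradicting smallness, and otherwise capping off the meridian boundary circles exhibits an essential surface in $N$ of boundary slope $\gamma$. By Hatcher's theorem $N$ has only finitely many boundary slopes of essential surfaces, so for all but finitely many $\gamma$ the manifold $N(\gamma)$ is non-Haken. Combining with the previous paragraph, all but finitely many long fillings of a small orientable cusped hyperbolic $N$ are closed, orientable, irreducible and non-Haken.

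The crux --- and the step I expect to be the real obstacle --- is forcing $\mathfrak{g}(\manifold_n)\to\infty$, since the generic-filling machinery above tends to \emph{cap off} complexity rather than create it. Two complementary routes seem viable. The homological route uses $\mathfrak{g}(\manifold)\ge\operatorname{rank}\pi_1(\manifold)\ge d\bigl(H_1(\manifold)\bigr)$, the minimal number of generators of $H_1$; since an irreducible non-Haken closed $3$-manifold has $b_1=0$ (a nonzero class in $H_2$ can be represented by a surface and maximally compressed to a two-sided incompressible one), the target $H_1(\manifold_n)$ must be a \emph{finite} group requiring $n$ generators, so one needs a base $N$ whose homology carries a large amount of torsion and filling slopes that preserve most of it. The geometric route instead exploits that the injectivity radius of a closed hyperbolic $3$-manifold of Heegaard genus $g$ is $O(\log g)$ (sweep out by a genus-$g$ surface, which can be taken minimal of area $\le 4\pi(g-1)$, and estimate), so it would suffice to arrange that the $\manifold_n$ are geometrically \emph{thick}, with injectivity radius $\to\infty$, while staying small in the Haken sense. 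In either approach the delicate point is the \emph{coexistence} of ``simple'' (no essential surfaces, $b_1=0$) with ``complicated'' (many $H_1$-generators, or large injectivity radius): a priori there is no reason such manifolds should exist, and exhibiting an explicit surgery description in which both sides can be controlled at once is the substance of the theorem.
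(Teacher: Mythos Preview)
The paper does not prove this theorem; it is quoted from Agol with only a short remark sketching the construction. Comparing your proposal to that remark:

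Your overall framework---produce the manifolds by Dehn filling and use Hatcher's finiteness of boundary slopes to guarantee non-Hakenness for generic slopes---is precisely Agol's. Where you diverge is at the step you yourself flag as ``the crux,'' namely forcing $\mathfrak{g}(\manifold_n)\to\infty$. Agol does not use either of your two suggested routes (homology rank or injectivity radius). Instead, he starts with \emph{small $n$-component link complements} $N_n$ (so the number of cusps grows with $n$, and the Heegaard genus of $N_n$ is unbounded), and then invokes the theorem of Moriah--Rubinstein: for all but finitely many slopes on each cusp, the Heegaard genus of the Dehn-filled manifold equals the Heegaard genus of the link complement. Intersecting these finitely-many-exceptions statements with Hatcher's, a generic filling of $N_n$ is closed, orientable, irreducible, non-Haken, and has $\mathfrak{g}\ge \mathfrak{g}(N_n)\to\infty$.

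Your two alternative routes, as stated, cannot close the gap when applied to a \emph{fixed} one-cusped $N$. Any Heegaard surface for $N$ becomes one for every filling, so $\mathfrak{g}(N(\gamma))\le \mathfrak{g}(N)$ for all $\gamma$; hence the homological bound $\mathfrak{g}\ge d(H_1)$ is capped by a constant depending only on $N$. Likewise all hyperbolic fillings of $N$ have volume strictly less than $\operatorname{vol}(N)$, which precludes injectivity radius tending to infinity. So you are forced to a family $N_n$ with growing genus, and then you still need a tool certifying that filling does not collapse the genus---which is exactly the role of Moriah--Rubinstein, the ingredient missing from your sketch.
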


\begin{remark}
The construction used to prove Theorem \ref{thm:a} starts with non-Haken $n$-component link complements, and performs Dehn fillings which neither create incompressible surfaces, nor decrease the (unbounded) Heegaard genera of the complements. The existence of such Dehn fillings is guaranteed by work due to Hatcher \cite{hatcher1982boundary} and Moriah--Rubinstein \cite{moriah1997heegaard}. As can be deduced from the construction, the manifolds in question are closed and orientable.
\end{remark}

\section{An obstruction to bounded cutwidth and pathwidth}
\label{sec:lwpw}

In this section we establish an upper bound for the Heegaard genus of a $3$-manifold $\manifold$ in terms of the pathwidth of any triangulation of $\manifold$ (cf. Theorem~\ref{thm:heeg_pw}). As an application of this bound we prove Theorem~\ref{thm:pw}. That is, we show that there exists an infinite family of $3$-manifolds not admitting triangulations of uniformly bounded pathwidth.

\begin{theorem}
\label{thm:width}
Let $\manifold$ be a closed, orientable $3$-manifold of linear width $\CW{\manifold}$. Furthermore, let $\tri$ be a triangulation of $\manifold$ with dual graph $\Gamma (\tri)$ of cutwidth $\cw{\Gamma(\tri)}$.
Then we have $\CW{\manifold} \leq 6\cw{\Gamma(\tri)}+7$.
\end{theorem}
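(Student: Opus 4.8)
The plan is to convert a cutwidth-optimal ordering of the dual graph into a \emph{single} linear splitting of $\manifold$ all of whose top boundary surfaces have genus linear in $\cw{\Gamma(\tri)}$. This suffices: since the linear width $\CW{\manifold}$ is by definition (Section~\ref{sec:3mfdParams}) the largest entry of the lexicographically smallest width over all linear splittings of $\manifold$, it is in particular at most the largest entry of the width of any \emph{one} linear splitting. So I would aim to produce a linear splitting of $\manifold$ each of whose top boundary surfaces $\altaltsurface$ satisfies $c(\altaltsurface)\le 6\cw{\Gamma(\tri)}+7$.

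Concretely, I would fix an ordering $v_1,\dots,v_n$ of the tetrahedra of $\tri$, equivalently of the nodes of $\Gamma(\tri)$, that realizes the cutwidth, so that every cutset has at most $\cw{\Gamma(\tri)}$ arcs. Build $\manifold$ by adding the tetrahedra in this order and let $X_\ell\subset\manifold$ be the image of the first $\ell$ of them. Passing from $X_{\ell-1}$ to $X_\ell$ amounts to attaching one $0$-handle ($v_\ell$ itself), then the at most four $1$-handles corresponding to the faces of $v_\ell$ identified with $X_{\ell-1}$ or with $v_\ell$, then the $2$-handles corresponding to the edges of $\tri$ that become interior at this step, then the $3$-handles corresponding to the vertices that become interior. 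I would collect the $0$- and $1$-handles of step $\ell$ into a piece $\compbodyone_\ell$ (``going up'') and the $2$- and $3$-handles into a piece $\compbodytwo_\ell$ (``going down''). Using the two dual descriptions of a compression body from Section~\ref{ssec:3mfds}, one checks that each $\compbodyone_\ell$ and each $\compbodytwo_\ell$ is a (possibly disconnected) compression body, that $\partial_+\compbodyone_\ell=\partial_+\compbodytwo_\ell$, and that consecutive pieces meet along their lower boundaries, so that $\manifold=(\compbodyone_1\cup\compbodytwo_1)\cup\dots\cup(\compbodyone_n\cup\compbodytwo_n)$ is a genuine linear splitting. Its lower boundary surfaces are the boundaries $\Sigma_\ell:=\partial N(X_\ell)$ of regular neighbourhoods of the $X_\ell$, and the top boundary surface of step $\ell$ is the boundary $\altaltsurface_\ell=\partial\bigl(N(X_{\ell-1})\cup v_\ell\bigr)$ of the neighbourhood obtained by attaching $v_\ell$ to $N(X_{\ell-1})$.

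It then remains to bound $g(\altaltsurface_\ell)$. The first ingredient is that $g(\altaltsurface_\ell)\le g(\Sigma_{\ell-1})+4$: forming $\altaltsurface_\ell$ out of $\Sigma_{\ell-1}\sqcup\partial v_\ell=\Sigma_{\ell-1}\sqcup S^2$ is accomplished by at most four disk-identifications (one per face of $v_\ell$ glued back), each of which, using orientability, either merges two surface components (leaving the total genus unchanged) or adds a handle (raising it by one). The second, and crucial, ingredient is that $g(\Sigma_\ell)\le 3\cw{\Gamma(\tri)}$: the surface $\Sigma_\ell$ separates $X_\ell$ from the remaining tetrahedra, and, pushed slightly off $X_\ell$, it inherits a cell structure in which the numbers of $2$-, $1$-, and $0$-cells are each $O(C_\ell)$, where $C_\ell\le\cw{\Gamma(\tri)}$ is the size of the corresponding cutset; an Euler-characteristic estimate then bounds $g(\Sigma_\ell)$ by $3C_\ell\le 3\cw{\Gamma(\tri)}$. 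Combining the two ingredients gives $g(\altaltsurface_\ell)\le 3\cw{\Gamma(\tri)}+4$, hence $c(\altaltsurface_\ell)\le\max\{0,\,2g(\altaltsurface_\ell)-1\}\le 6\cw{\Gamma(\tri)}+7$ for every $\ell$. Thus the width of the linear splitting just constructed has all entries at most $6\cw{\Gamma(\tri)}+7$, and therefore $\CW{\manifold}\le 6\cw{\Gamma(\tri)}+7$.

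The hard part is the bound $g(\Sigma_\ell)\le 3\cw{\Gamma(\tri)}$: one must see that the genus of the separating surface is governed by the size of the cutset rather than by the (unbounded) number of tetrahedra in $X_\ell$, which is precisely where the hypothesis on $\cw{\Gamma(\tri)}$ enters. The subtlety is that $X_\ell$ need not be a submanifold of $\manifold$, and an edge of $\tri$ may be incident to arbitrarily many tetrahedra, so the pieces of $\Sigma_\ell$ lying in the tetrahedra around such an edge must be shown to assemble into a bounded number of ``strips'' rather than contributing one piece per tetrahedron; only then does the cell count come out $O(C_\ell)$, and one then has to keep track of the constants to land on the stated bound. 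A second, more routine, obstacle is confirming that the regrouped handle attachments genuinely form a \emph{linear} splitting into compression bodies and not a more general graph splitting --- in particular that, at each step, the $2$- and $3$-handles together constitute a compression body via the dual construction recalled in Section~\ref{ssec:3mfds}.
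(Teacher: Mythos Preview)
Your overall strategy is the paper's: order the tetrahedra by a cutwidth-optimal layout, run through the canonical handle decomposition $\chd{\tri}$ one tetrahedron at a time to produce a linear splitting, note that each top surface exceeds the preceding lower surface $\Sigma_{\ell-1}$ in total genus by at most $4$ (this is exactly Lemma~\ref{lem:layoutToHandDec}), and reduce everything to showing $g(\Sigma_\ell)\le 3k$ for $k=\cw{\Gamma(\tri)}$. The only genuine difference is how that last bound is obtained. You propose an Euler-characteristic count on a cell structure of $\Sigma_\ell$ and correctly flag the sticking point: around a high-degree edge of $\tri$ one must argue that the pieces of $\Sigma_\ell$ coalesce into $O(C_\ell)$ cells rather than one per incident tetrahedron. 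The paper sidesteps this entirely by passing to the \emph{complementary} $3$-dimensional piece $\hbody(C_\ell)$, built from the $\le k$ $1$-handles of the cutset triangles together with the $\le 3k$ $2$-handles and $\le 3k$ $3$-handles of their edges and vertices, and observing that $\Sigma_\ell=\partial\hbody_\ell$ is one of the two parts of $\partial\hbody(C_\ell)$. Since the $2$- and $3$-handles of $\hbody(C_\ell)$ form a thickened graph with at most $3k$ arcs (hence a union of handlebodies of total genus $\le 3k$), and the remaining $\le k$ $1$-handles are attached to it as $2$-handles (which can only split components or lower genus), one reads off $g(\Sigma_\ell)\le g(\partial\hbody(C_\ell))\le 3k$ directly. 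This buys the paper the exact constant $3$ without any bookkeeping, and the high-degree-edge difficulty never arises because the count is over the faces of the $k$ cutset triangles rather than over incident tetrahedra; your Euler-characteristic route should also go through, but with more work to pin down the constant.
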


\begin{proof}[Proof of Theorem \ref{thm:heeg_pw} assuming Theorem~\ref{thm:width}]
By Theorem \ref{thm:parameters}, $\cw{\Gamma(\tri)} \leq 4\pw{\Gamma(\tri)}$ since dual graphs of $3$-manifold triangulations are 4-regular. By Theorem~\ref{thm:st}, $\CW{\manifold} = 2\mathfrak{g}(\manifold)-1$ whenever $\manifold$ is irreducible and non-Haken. Combining these relations with the inequality provided by Theorem~\ref{thm:width} yields the result.
\end{proof}

Theorem~\ref{thm:pw} is now obtained from Theorem~\ref{thm:heeg_pw} and Agol's Theorem~\ref{thm:a}.
It remains to prove Theorem \ref{thm:width}. 
We begin with a basic, yet very useful definition.

\newpage

\begin{definition}
\label{def:chd}
Let $\tri$ be a triangulation of a $3$-manifold $\manifold$. The {\em canonical handle decomposition $\chd{\tri}$} of $\manifold$ associated with $\tri$ is given by 
\begin{itemize}
\itemsep0em
    \item one $0$-handle for the interior of each tetrahedron of $\tri$,
    \item one $1$-handle for a thickened version of the interior of each triangle of $\tri$,
    \item one $2$-handle for a thickened version of the interior of each edge of $\tri$, and 
    \item one $3$-handle for a neighborhood of each vertex of $\tri$.
  \end{itemize}
\end{definition}

\begin{remark}
  In the above definition of a canonical handle decomposition of a triangulation we associate $0$-handles with tetrahedra and $3$-handles with vertices of the triangulation. This is motivated by the fact that we model this decomposition on the dual graph rather than on the triangulation itself. The reason for this choice, in turn, is that it is the dual graph of a triangulation which acts as an intermediary between the topology of a $3$-manifold and the framework of structural graph theory.
\end{remark}

The following lemma gives a bound on the complexity of boundary surfaces occurring in the process of building up a manifold $\manifold$ from the handles of the canonical handle decomposition of a given triangulation of $\manifold$.

\begin{lemma}
\label{lem:layoutToHandDec}
Let $\tri$ be a triangulation of a $3$-manifold $\manifold$ and let $\Delta_1 < \Delta_2 < \ldots < \Delta_n \in \tri$ be a linear ordering of its tetrahedra. Moreover, let $\hbody_1 \subset \hbody_2 \subset \ldots \subset \hbody_n = \chd{\tri}$ be a filtration of $\chd{\tri}$ where $ \hbody_j\subset \chd{\tri}$ is the codimension zero submanifold consisting of all handles of $\chd{\tri}$ disjoint from tetrahedra $\Delta_i$, $i > j$. Then passing from $\hbody_{j}$ to $\hbody_{j+1}$ corresponds to adding at most $15$ handles. Let $\compbodytwo$ be the codimension zero submanifold constructed from $\hbody_{j}$ by adding an arbitrary subset of these handles, then the sum of the genera of $\partial \compbodytwo$ is no larger than the sum of the genera of the components of $\partial \hbody_j$ plus four.
\end{lemma}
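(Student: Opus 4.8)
The plan is to follow a single non-negative integer along the filtration: for a closed surface $\Sigma$ with connected components $\Sigma_1,\dots,\Sigma_c$, write $G(\Sigma)=\sum_{i=1}^c g(\Sigma_i)$. Every bounding surface that occurs is the boundary of a codimension-zero submanifold of $\manifold$ assembled from a subfamily of the handles of $\chd{\tri}$, hence is closed and orientable (recall that $\manifold$, and therefore each such submanifold, is orientable). First I would pin down which handles of $\chd{\tri}$ lie in $\hbody_{j+1}$ but not in $\hbody_j$: by the definition of the filtration these are exactly the handles of $\chd{\tri}$ whose dual cell of $\tri$ --- a tetrahedron, triangle, edge, or vertex --- is contained in $\Delta_{j+1}$ but in none of the later tetrahedra $\Delta_{j+2},\dots,\Delta_n$. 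Since a tetrahedron contains itself, four triangles, six edges, and four vertices, the set $\hbody_{j+1}\setminus\hbody_j$ consists of the $0$-handle dual to $\Delta_{j+1}$, at most four $1$-handles, at most six $2$-handles, and at most four $3$-handles --- at most $1+4+6+4=15$ handles in total.

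Next I would fix the order in which these handles are attached to $\hbody_j$, namely: the $0$-handle first, then the $1$-handles, then the $2$-handles, then the $3$-handles. Because $\chd{\tri}$ is an honest handle decomposition of $\manifold$, this is a legitimate sequence of handle attachments (each handle of index $i$ is glued along the prescribed part of the boundary of the current submanifold), and every partial union is again a codimension-zero submanifold with closed orientable boundary. It then remains to record how attaching a single handle $h$ affects $G$. Using the identity $\chi(\Sigma)=2c-2G(\Sigma)$ for a closed orientable surface with $c$ components, together with the standard description of $\partial(W\cup h)$ in terms of $\partial W$:
\begin{itemize}
  \itemsep0em
  \item a $0$-handle is glued along the empty set, so $\partial(W\cup h)=\partial W \sqcup S^2$ and $G$ is unchanged;
  \item a $1$-handle is glued along two disks in $\partial W$ and replaces them by an annulus, so $\chi$ drops by $2$; distinguishing whether the two attaching disks lie on the same component of $\partial W$ or on two different ones shows that $G$ increases by at most $1$;
  \item a $2$-handle is glued along an annulus in $\partial W$ and replaces it by two disks, so $\chi$ rises by $2$; this is a surgery along a simple closed curve, which raises the number of components by at most one, so $G$ does not increase;
  \item a $3$-handle caps off an $S^2$ component of $\partial W$ by a ball (for a vertex lying on $\partial\manifold$ it caps off a disk inside the bounding surface), so $G$ is unchanged.
\end{itemize}

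Finally I would assemble the estimate. Starting from $\partial\hbody_j$ with value $G(\partial\hbody_j)$: attaching the $0$-handle keeps $G$ at $G(\partial\hbody_j)$; attaching the (at most four) $1$-handles raises $G$ by at most $4$ in total, so from this point on every bounding surface has $G\le G(\partial\hbody_j)+4$; and attaching the remaining $2$- and $3$-handles can only keep $G$ the same or lower it. Hence the maximum of $G$ over all bounding surfaces arising while passing from $\hbody_j$ to $\hbody_{j+1}$ is at most $G(\partial\hbody_j)+4$, which together with the bound of $15$ on the number of added handles is the assertion. I expect the only genuinely delicate points to be the two middle bullets --- that a $1$-handle raises $G$ by at most $1$ and that a $2$-handle does not raise it --- which come down to an elementary but careful analysis of how surgery along a $0$-sphere, respectively a circle, changes a closed orientable surface; choosing the attachment order to be nondecreasing in the handle index is precisely what makes this bookkeeping line up.
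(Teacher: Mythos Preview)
Your proposal is correct and follows essentially the same approach as the paper: count the at most $1+4+6+4=15$ faces of a tetrahedron to bound the number of new handles, and observe that among these only the (at most four) $1$-handles can raise the total genus of the bounding surface, each by at most one. The paper's proof is a two-sentence sketch of exactly this argument, whereas you have carefully spelled out the effect of each handle index on $G$ via Euler characteristic; your added detail is sound and the attachment-order bookkeeping you highlight is indeed the only subtle point.
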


\begin{proof}[Proof of Lemma~\ref{lem:layoutToHandDec}]
This is apparent from the fact that every tetrahedron consists of $15$ (non-empty) faces and thus at most $15$ handles of $\chd{\tri}$ are disjoint from $\hbody_{j}$ but not disjoint from $\hbody_{j+1}$. In particular, at most $15$ handles are added at the $j$-th level of the filtration. Moreover, note that at most four of the handles added in every step are $1$-handles (corresponding to the four triangles of the tetrahedron), which are the only handles increasing the sum of the genera of $\partial \hbody_{j}$.
\end{proof}

With the help of Lemma~\ref{lem:layoutToHandDec} we can now prove Theorem \ref{thm:width}.

\begin{proof}[Proof of Theorem \ref{thm:width}]
Let $v_j$, $1 \leq j \leq n$, be the nodes of $\Gamma(\tri)$ with corresponding tetrahedra $\Delta_j \in \tri$, $1 \leq j \leq n$. We may assume, without loss of generality, that the largest cutset of the linear ordering $v_1 < v_2 < \ldots < v_n$ has cardinality~$\cw{\Gamma(\tri)} = k$. 

Let $\hbody_j \subset \chd{\tri}$, $1\leq j \leq n$, be the filtration from Lemma~\ref{lem:layoutToHandDec}. Moreover, let $C_j$, $1 \leq j < n$, be the cutsets of the linear ordering above. Naturally, the cutset $C_j$ can be associated with at most $k$ triangles of $\tri$ with, together, at most $3k$ edges and at most $3k$ vertices of $\tri$. Let $\hbody (C_j) \subset \chd{\tri}$ be the corresponding submanifold formed from the at most $k$ $1$-handles and at most $3k$ $2$- and $3$-handles each of $\chd{\tri}$ associated with these faces of~$\tri$. 

By construction, the boundary $\partial \hbody (C_j)$ of $\hbody (C_j)$ decomposes into two parts, one of which coincides with the boundary surface $\partial \hbody_j$.  Since $\hbody (C_j)$ is of the form ``neighborhood of $k$ triangles in $\tri$'', and since the $2$- and $3$-handles of $\chd{\tri}$ form a handlebody, the $2$- and $3$-handles of $\hbody (C_j)$ form a union of handlebodies with sum of genera at most $3k$.

To complete the construction of $\hbody (C_j)$, the remaining at most $k$ $1$-handles are attached to this union of handlebodies as $2$-handles. These either increase the number of boundary surface components, or decrease the overall sum of genera of the boundary components. Altogether, the sum of genera of $\partial \hbody_j \subset \partial \hbody (C_j)$ is bounded above by $3k$.
	
Hence, following Lemma~\ref{lem:layoutToHandDec}, the sum $g$ of genera of the components of any bounding surface for any sequence of handle attachments of $\chd{\tri}$ compatible with the ordering $v_1 < v_2 < \ldots < v_n$ is bounded above by $ 3\cw{\Gamma(\tri)} + 4$. It follows that $2g-1 \leq 6\cw{\Gamma(\tri)} + 7$, and finally, by the definition, $\CW{\manifold} \leq 6\cw{\Gamma(\tri)} + 7$.
\end{proof}

\section{An obstruction to bounded congestion and treewidth}
\label{sec:toptw}

The goal of this section is to prove Theorems~\ref{thm:tw} and \ref{thm:heeg_tw}, the counterparts of Theorem~\ref{thm:pw} and \ref{thm:heeg_pw} for treewidth.
At the core of the proof is the following explicit connection between the congestion of the dual graph of any triangulation of a $3$-manifold $\manifold$ and its graph width.

\begin{theorem}
\label{thm:graph_width_cng}
Let $\manifold$ be a closed, connected, orientable $3$-manifold of graph width $\graph{\manifold}$, and $\tri$ be a triangulation of $\manifold$ with dual graph $\Gamma (\tri)$ of congestion $\cng{\Gamma(\tri)}$.
Then either $\manifold$ has graph width $\graph{\manifold} \leq \frac{9}{2}\cng{\Gamma(\tri)}$, or $\tri$ only contains one tetrahedron.
\end{theorem}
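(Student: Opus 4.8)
The plan is to mirror the proof of Theorem~\ref{thm:width}, with the linear ordering of the tetrahedra replaced by an optimal carving (congestion) decomposition and the resulting linear splitting replaced by a graph splitting. Write $k=\cng{\Gamma(\tri)}$ and fix an embedding $\mathcal{E}=(\iota,\rho)$ of $\Gamma(\tri)$ into an unrooted binary tree $\alttree$ realising $k$, so that $\iota$ identifies the tetrahedra of $\tri$ with the leaves of $\alttree$ and every arc $h\in E(\alttree)$ has at most $k$ arcs of $\Gamma(\tri)$ running parallel to it. Deleting an arc $h$ disconnects $\alttree$ into two subtrees, whose leaf sets $A_h,B_h$ partition the tetrahedra; since $\alttree$ is a tree, the arcs of $\Gamma(\tri)$ running parallel to $h$ are exactly the at most $k$ triangles of $\tri$ glued between an $A_h$-tetrahedron and a $B_h$-tetrahedron.

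Next I would turn the canonical handle decomposition $\chd{\tri}$ into a graph splitting of $\manifold$ whose underlying fork complex is modelled on $\alttree$. Each handle of $\chd{\tri}$ is carried by a unique face of $\tri$, which one assigns to a branch $A_h$ or to the ``junction'' around a degree-$3$ node of $\alttree$ according to which subtrees the tetrahedra of that face lie in. For an arc $h$, the associated thin (lower boundary) surface $\altsurface_h$ separates the $A_h$-handles from the rest; exactly as in the proof of Theorem~\ref{thm:width} it is a subsurface of the boundary of the submanifold spanned by the at most $k$ parallel $1$-handles together with their at most $3k$ incident $2$-handles and $3k$ incident $3$-handles, and these $2$- and $3$-handles form handlebodies of total genus at most $3k$ (attaching the $1$-handles as dual $2$-handles does not raise the genus), so $g(\altsurface_h)\le 3k$. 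The same count at a degree-$3$ node $v$ with incident arcs $h_1,h_2,h_3$ involves only triangles straddling two of the three subtrees, of which there are at most $3k/2$ (each runs parallel to two of the $h_i$), hence at most $9k/2$ incident edges and vertices; since $\altsurface_{h_1}\sqcup\altsurface_{h_2}\sqcup\altsurface_{h_3}$ is the boundary of this junction region, $g(\altsurface_{h_1})+g(\altsurface_{h_2})+g(\altsurface_{h_3})\le 9k/2$. The top (grip) surfaces of the splitting then arise from adjacent thin surfaces by attaching a bounded batch of handles --- at most $15$, of which at most $4$ are $1$-handles, when a tetrahedron is absorbed near a leaf, and the junction handles when branches are merged at a degree-$3$ node --- and a short computation (using $g(\altsurface_h)\le 3k$ and $\sum_i g(\altsurface_{h_i})\le 9k/2$) keeps every grip surface connected and of genus strictly less than $6k$; for instance, near a leaf its genus is at most $3k+4$.

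Finally I would invoke the definition of graph width: the splitting just built witnesses $\graph{\manifold}\le\max\{g(\altaltsurface_j):j\text{ a grip of }\forkcomp\}<6k=6\cng{\Gamma(\tri)}$, provided $\alttree$ has at least one arc, i.e.\ $\tri$ has at least two tetrahedra. The strict inequality holds because $k\ge 2$: for a connected triangulation every cut of the $4$-regular graph $\Gamma(\tri)$ has even cardinality $4|A_h|-2\cdot\#\{\text{arcs inside }A_h\}$, which is nonzero by connectedness and hence at least~$2$. And if $\tri$ consists of a single tetrahedron, then $\alttree$ is a single node with no arcs, there is nothing to construct, and the second alternative of the statement holds. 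The main obstacle is the middle step: promoting the informal picture above to a bona fide graph splitting --- specifying the compression bodies, the fork complex $\forkcomp$, and the structure map $\rho\colon(\manifold;\partial\manifold)\to(\forkcomp;\partial\forkcomp)$ satisfying \cite[Definition~5.1.7]{scharlemann2016lecture} --- while organising the passage from thin to top surfaces at the leaves and at the degree-$3$ nodes of the carving tree (where a whole tetrahedron's, respectively a junction's, worth of handles is glued on) so that every grip surface stays of genus below $6\cng{\Gamma(\tri)}$; in particular, a naive merging of two branches along the disjoint union of their thin surfaces, whose total genus can be as large as $6k$, has to be avoided.
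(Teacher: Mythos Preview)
Your outline is essentially the paper's approach: build a graph splitting of $\manifold$ from $\chd{\tri}$ modelled on the carving tree $\alttree$, and bound the grip genera using the observation that at a degree-$3$ node $v$ only $\tfrac{3k}{2}$ triangles are involved, so the junction region is bounded by surfaces of total genus $\le \tfrac{9k}{2}$. You have correctly identified that naive merging at a node could give a grip of genus as large as $6k$, and you flag this as the main obstacle; that is exactly where your proof is incomplete.

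Here is how the paper closes the gap. First, fix a \emph{root arc} $r\in E(\alttree)$; this orients every degree-$3$ node $v$ so that two of its incident arcs $e,e'$ lie ``below'' $v$ and the third lies above. The pair of compression bodies at $v$ is built so that the lower boundary $\partial_-\mathcal{C}^1_v$ is precisely the disjoint union of the two \emph{lower} thin surfaces (your $\altsurface_e\sqcup\altsurface_{e'}$), and the grip $\partial_+\mathcal{C}^1_v$ is obtained from it by attaching only those $1$-handles whose arcs in $\Gamma(\tri)$ run parallel to \emph{both} $e$ and $e'$ (i.e., connect the two lower subtrees). There are at most $k$ such arcs, since each runs parallel to $e$. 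Your own bound $g(\altsurface_{h_1})+g(\altsurface_{h_2})+g(\altsurface_{h_3})\le \tfrac{9k}{2}$ a fortiori gives $g(\partial_-\mathcal{C}^1_v)=g(\altsurface_e)+g(\altsurface_{e'})\le\tfrac{9k}{2}$, and hence
\[
g(\partial_+\mathcal{C}^1_v)\ \le\ \tfrac{9k}{2}+k\ =\ \tfrac{11k}{2}\ <\ 6k.
\]
The root arc $r$ carries one further grip, for which the same analysis applies with the ``junction region'' replaced by a neighbourhood of the $\le k$ triangles routed through $r$, again giving genus $<6k$. Finally, at the leaves the paper does not use your $3k+4$ bound (which would force you to check $k\ge 2$ separately); instead it treats self-identifications of $\Delta_w$ explicitly so that $\mathcal{H}_w$ is always a handlebody of genus $\le 1$. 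With these choices the fork complex and the map $\rho$ are explicit and all grips are accounted for, which is what your sketch was missing.
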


\begin{proof}[Proof of Theorem \ref{thm:heeg_tw} assuming Theorem~\ref{thm:graph_width_cng}] First note that the only closed orientable $3$-manifolds which can be triangulated with a single tetrahedron are the $3$-sphere of Heegaard genus zero, and the lens spaces of type $\operatorname{L}(4,1)$ and $\operatorname{L}(5,2)$ of Heegaard genus one (see Example~\ref{ex:lens}) for which Theorem \ref{thm:heeg_tw} holds.
Otherwise, since dual graphs of $3$-manifold triangulations are $4$-regular, Theorem \ref{thm:tw-cng-tw} yields $\cng{\Gamma(\tri)} \leq 4(\tw{\Gamma(\tri)}+1)$. In addition, Theorem~\ref{thm:graphincompressible} implies that $\graph{\manifold} = \mathfrak{g}(\manifold)$ whenever $\manifold$ is irreducible and non-Haken. Combining these relations with the inequality provided by Theorem~\ref{thm:graph_width_cng} we obtain Theorem \ref{thm:heeg_tw}.
\end{proof}

Similarly as in Section \ref{sec:lwpw}, Theorem \ref{thm:tw} immediately follows from Theorems \ref{thm:heeg_tw} and \ref{thm:a}.
Hence, the remainder of the section is dedicated to the proof of Theorem \ref{thm:graph_width_cng}.

\begin{proof}[Proof of Theorem \ref{thm:graph_width_cng}] 
Let $\manifold$ be a closed, connected, orientable $3$-manifold, $\tri$ be a triangulation of $\manifold$ whose dual graph $\Gamma(\tri)$ has congestion $\cng{\Gamma (\tri)} \leq k$, and let $\tree$ be an unrooted binary tree realizing $\cng{\Gamma (\tri)} \leq k$ (cf.\ Definition \ref{defn:cng}).

If $k=0$, $\tri$ must consist of a single tetrahedron, and the theorem holds. Thus we can assume that~$k \geq 1$. 

The idea of the proof is to first construct a graph splitting of $\manifold$ from a fork complex $\forkcomp$ modeled on $\tree$ (cf.\ Section \ref{ssec:3mfds}), and then to analyze the genera of the top boundary surfaces appearing in the splitting to see that they are all bounded above by $\frac{9}{2}\cng{\Gamma(\tri)}$.

\begin{figure}
  \centerline{\includegraphics[scale=1.15]{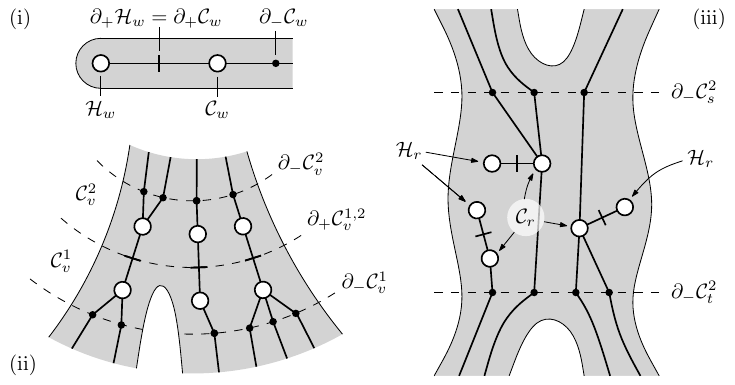}}
  \caption{Local pictures of the fork complex $\forkcomp$ constructed in the proof of Theorem \ref{thm:graph_width_cng}}
  \label{fig:forkcomplex}
\end{figure}

\paragraph*{Construction of the splitting.} Consider the canonical handle decomposition $\chd{\tri}$ of $\manifold$ associated with $\tri$ as defined in Definition \ref{def:chd}. Every compression body in the graph splitting described below is either a union of handles in $\chd{\tri}$, a thickened surface parallel to the boundary surface of some union of handles from $\chd{\tri}$, or a combination of both. In particular, the graph splitting maintains the handle structure coming from $\chd{\tri}$.
Note that we do {\em not} require the following compression bodies to be connected, but rather to be the union of connected compression bodies.

For every leaf $w \in V(\tree)$, a handlebody $\mathcal{H}_w$ is constructed as follows. Consider the abstract tetrahedron $\Delta_w \in \tri$ associated to $w$. If $\Delta_w$ has no self-identifications in $\tri$, then $\mathcal{H}_w$ is just the $0$-handle of $\chd{\tri}$ corresponding to $\Delta_w$. If $\Delta_w$ exhibits self-identifications, then first note that at most one pair of triangular faces of $\Delta_w$ are identified, otherwise $\Delta_w$ would be disjoint from the rest of $\tri$. Up to symmetry there are two possibilities:

	
\noindent
\begin{minipage}{0.5\textwidth}
Either, $\Delta_w$ forms a ``snapped'' $3$-ball in $\tri$, see Figure \ref{fig:hbody_w}(i), in which case $\hbody_w$ is built from the $0$-handle, the $1$-handle, and the $2$-handle of $\chd{\tri}$ corresponding to $\Delta_w$, to the face gluing, and to the edge $\{1,2\}$ of $\Delta_w$, respectively.

\hspace*{1em} Or,  $\Delta_w$ forms a solid torus in $\tri$, see Figure \ref{fig:hbody_w}(ii), and then $\hbody_w$ consists of the $0$-handle and of the $1$-handle of $\chd{\tri}$, corresponding to $\Delta_w$ and to the face gluing, respectively.
\end{minipage}\hfill
\begin{minipage}{0.46\textwidth}
  \centering
  \includegraphics[scale=.8]{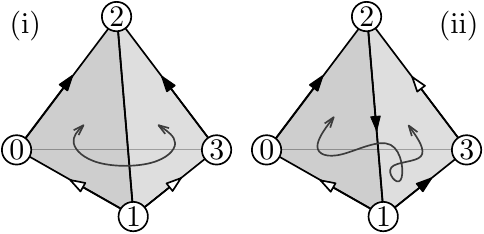}
  \captionof{figure}{(i) A snapped 3-ball, and\\(ii) a one-tetrahedron solid torus}
  \label{fig:hbody_w}
\end{minipage}~

\medskip

Moreover, for every leaf $w \in V(\tree)$, a compression body $\mathcal{C}_w = \partial \mathcal{H}_w \times [0,1]$ is attached to $\partial\mathcal{H}_w$ along $\partial_+\mathcal{C}_w = \partial\mathcal{H}_w \times \{0\}$. See Figure \ref{fig:forkcomplex}(i).

\medskip

Before proceeding, let us fix a ``root arc'' $r \in E(\tree)$.
This choice induces a partial order on $V(\tree)$: for $x,y \in V(\tree)$, $x \prec y$ if and only if $y$ is contained by the path connecting $x$ with $r$. We also say ``$x$ is below $y$''. In particular, $x \prec x$ for all $x \in V(\tree)$. Given $x \in V(\tree)$, let $\tri_x$ denote the submanifold of $\manifold$ consisting of
\begin{itemize}
\itemsep0em
	\item any $0$-handle of $\chd{\tri}$ corresponding to a leaf of $\tree$ below $x$, 
	\item any $1$-handle of $\chd{\tri}$ where both adjacent $0$-handles are in $\tri_x$
	\item any $2$-handle of $\chd{\tri}$ where all adjacent $0$-handles are in $\tri_x$, and
	\item any $3$-handle of $\chd{\tri}$ where all adjacent $0$-handles are in $\tri_x$.
\end{itemize}

\noindent
In other words, $\tri_x$ is the submanifold of $\manifold$ {\em spanned by} the $0$-handles of $\chd{\tri}$ below $x$.

\bigskip

\noindent
{\bf Claim 1.} If $x \prec y$ then $\tri_x \subseteq \tri_y$. If $x \not \prec y$ and $y \not \prec x$ then $\tri_x \cap \tri_y = \emptyset $ and $\partial \tri_x \cap \partial \tri_y = \emptyset$.

\medskip

\noindent
{\itshape Proof of Claim 1.} The first part of the claim is obvious.
For the second part, let $x,y  \in V(\tree)$ with $x \not \prec y$ and $y \not \prec x$. The way we construct $\tri_x$ and $\tri_y$ ensures that $\tri_x$ and $\tri_y$ do not only have disjoint interiors, but are also separated from each other such that their boundaries are disjoint as well.
Indeed, if $\tri_{x}$ and $\tri_{y}$ are both collections of $0$-handles this is certainly true as each of these $0$-handles can be thought of as living inside a single tetrahedron of $\tri$ away from its boundary.
As soon as we have two such $0$-handles living in two adjacent tetrahedra in, say, $\tri_x$, the $1$-handle(s) corresponding to their common triangular face(s) can be glued to the two $0$-handles and the resulting boundary is still disjoint from any $\tri_{y}$ being such a collection of $0$- and $1$-handles itself.
Now fix an edge $f$ of $\tri$ and suppose that $\tri_x$ contains all $0$- and $1$-handles associated to tetrahedra and triangles around $f$, then we are safe to add the $2$-handle corresponding to $f$ to $\tri_x$ and still be disjoint from $\tri_y$ even if $\tri_y$ itself is such a collection of $0$-, $1$-, and $2$-handles. 
The case of adding $3$-handles to $\tri_x$ and $\tri_y$ corresponding to vertices of $\tri$ is completely analogous to the previous case.
This proves the second part of the claim.

\bigskip
			
We can now describe the remaining parts of the graph splitting of $\manifold$. For this, let $v \in V(\tree)$ be a degree three node, $u,u' \in V(\tree)$ be the two nodes below and incident to $v$, and let $e, e' \in E(\tree)$ denote the arcs with endpoints $v$ and $u$, $u'$, respectively. For every such degree three node $v \in V(\tree)$, a pair of compression bodies $(\mathcal{C}^1_v,\mathcal{C}^2_v)$ is constructed in two steps. (See Figure \ref{fig:forkcomplex}(ii) for an example of a local schematic picture of $\forkcomp$ around $v$.)
		
\begin{enumerate}
	\item To construct the first compression body $\mathcal{C}^1_v$, we start with $(\partial (\tri_{u} \cup \tri_{u'})) \times [0,1]$ and attach to $(\partial (\tri_u \cup \tri_{u'})) \times \{1\}$ all $1$-handles of $\chd{\tri}$ corresponding to triangles of $\tri$ associated to arcs of $\Gamma (\tri)$ running parallel to $e$ {\em and} $e'$. (These are the remaining $1$-handles of $\tri_v$ not attached earlier.) We then define its lower and upper boundary as $\partial_{-} \mathcal{C}^1_v = (\partial (\tri_{u} \cup \tri_{u'})) \times \{0\}$ and $\partial_{+}\mathcal{C}^1_v = \partial \mathcal{C}^1_v \setminus \partial_{-} \mathcal{C}^1_v $, respectively.
	\item For the second compression body $\mathcal{C}^2_v$, we start with $\partial_{+}\mathcal{C}^1_v \times [0,1]$ (with the top boundary being defined as $\partial_{+} \mathcal{C}^2_v = \partial_{+} \mathcal{C}^1_v \times \{1\}$). The compression body is then completed by attaching along $\partial_{+}\mathcal{C}^1_v \times \{ 0 \}$ all $2$- and $3$-handles of $\chd{\tri}$ which are contained in $\tri_v$ but not in $\tri_{u} \cup \tri_{u'}$. We set $\partial_{-}\mathcal{C}^2_v = \partial \mathcal{C}^2_v \setminus \partial_{+} \mathcal{C}^1_v$.
\end{enumerate}

For the root arc $r = \{s,t\}$, we construct a pair of compression bodies $(\mathcal{C}_r,\mathcal{H}_r)$ as follows. ($\mathcal{H}_r$ is a union of handlebodies, hence the notation.)

\begin{enumerate}
	\item To build $\mathcal{C}_r$, start with $(\partial_{-}\mathcal{C}_s^2 \cup \partial_{-}\mathcal{C}_t^2) \times [0,1]$, define the lower boundary as $\partial_{-}\mathcal{C}_r = (\partial_{-}\mathcal{C}_s^2 \cup \partial_{-}\mathcal{C}_t^2) \times \{0\}$ and attach to $(\partial_{-}\mathcal{C}_s^2 \cup \partial_{-}\mathcal{C}_t^2) \times \{1\}$ all $1$-handles of $\chd{\tri}$ corresponding to arcs of $\Gamma(\tri)$ routed {\em through} $r$. As usual, $\partial_{+}\mathcal{C}_r = \partial \mathcal{C}_r \setminus \partial_{-} \mathcal{C}_r$.
	\item Finally, to obtain $\mathcal{H}_r$, take $\partial_{+}\mathcal{C}_r \times [0,1]$, set $\partial_{+}\mathcal{H}_r =\partial_{+}\mathcal{C}_r \times \{1\}$ and identify it with $\partial_{+}\mathcal{C}_r$, and attach all remaining $2$- and $3$-handles of $\chd{\tri}$ to $\partial_{+}\mathcal{C}_r \times \{0\}$.	
\end{enumerate}		
Figure \ref{fig:forkcomplex}(iii) shows a possible scenario around the root arc. This finishes the construction.

\bigskip

\noindent
{\bf Claim 2.} The compression bodies $\mathcal{H}_w$, $\mathcal{C}_w$, $\mathcal{C}^1_v$, $\mathcal{C}^2_v$, $\mathcal{C}_r$, and $\mathcal{H}_r$ (where $w,v \in V(\tree)$, $\deg(w)=1$, $\deg(v)=3$, and $r \in E(\tree)$ is the root arc), glued together along their appropriate boundary components, form a graph splitting of $\manifold$.

\medskip

\noindent
{\itshape Proof of Claim 2.} It follows from Claim 1 and the construction that all compression bodies above have pairwise disjoint interiors. We check that their lower and upper bonudary components match up whenever they are identified (cf.\ Figure~\ref{fig:forkcomplex}).  For the identifications between $\mathcal{H}_w$ and $\mathcal{C}_w$, between $\mathcal{C}_v^1$ and $\mathcal{C}_v^2$, and between $\mathcal{C}_r$ and $\mathcal{H}_r$ this is immediate. Now let $v \in V(\tree)$ be of degree three with adjacent nodes $u,u' \in V(\tree)$ below. Note that, by construction, $\partial_-\mathcal{C}_u^2=\partial\tri_u$ and $\partial_-\mathcal{C}_{u'}^2=\partial\tri_{u'}$ and they are disjoint by Claim 1. Hence $\partial_-\mathcal{C}_v^1 = (\partial(\tri_u \cup \tri_{u'}))\times\{0\}$ can indeed be identified with the disjoint union of $\partial_- \mathcal{C}_{u}^2$ and $\partial_- \mathcal{C}_{u'}^2$. For the gluings between $\partial_-\mathcal{C}_r$ and $\partial_- \mathcal{C}_s^2 \cup \partial_- \mathcal{C}_t^2$, where $r = \{s,t\} \in E(\tree)$ is the root arc, the reasoning is analogous. Finally, as it is modeled on the tree, the fork complex $\forkcomp$ is  {\em exact} (see \cite[Definition 5.1.4]{scharlemann2016lecture}), yielding a graph splitting of $\manifold$.

\paragraph*{Bounding the width.} Following \cite[Section 5.1]{scharlemann2016lecture}, cf. Section~\ref{sec:3mfdParams}, the width of the graph splitting of $\mathcal{M}$ exhibited above is given by the largest genus of a top boundary of a connected compression body in the graph splitting. However, this splitting, by construction, consists of unions of compression bodies. In particular, $\mathcal{C}^1_v, \mathcal{C}^2_v, \mathcal{C}_r$, and $\mathcal{H}_r$ may be disconnected. Note that this is not a problem since the sum of genera of top boundaries for every such union cannot be smaller than the genus of the largest genus compression body in the union. Hence, with this adjustment, we are left with the multiset  
\[
\left\{ g(\partial \mathcal{H}_w), g( \partial_{+} \mathcal{C}^1_v), g(\partial\mathcal{H}_r) ~\Big|~ w,v \in V(\tree), \deg(w)=1, \deg(v)=3, r \in E(\tree)~\text{root~arc} \right\}
\]
to determine an upper bound on the graph width of $\mathcal{M}$, where $g(\mathcal{S})$ denotes the sum of the genera of all connected components of $\mathcal{S}$.

The handlebody $\mathcal{H}_w$ has genus at most one: $g(\partial\mathcal{H}_w)=0$ if $\Delta_w$ is a $0$-handle, or forms a ``snapped'' $3$-ball in $\tri$, and $g(\partial\mathcal{H}_w)=1$ if $\Delta_w$ forms a solid torus in $\tri$.

Let us fix a node $v \in V(\tree)$ of degree three. Our goal is to upper-bound $g(\partial_{+} \mathcal{C}^1_v)$. Note that, since $\cng{\Gamma (\tri)} \leq k$, at most $k$ arcs of $\Gamma (\tri)$ run parallel to each arc of $\tree$. Moreover, counting those arcs of $\Gamma (\tri)$ along the three arcs of $\tree$ incident to $v$, we encounter each of them twice, therefore at most $\frac{3}{2} k$ arcs of $\Gamma (\tri)$ meet $v$. Based on this fact, we show that $g(\partial_{+} \mathcal{C}^1_v) \leq \frac{9k}{2}$. The proof relies on the next key observation. 

\medskip

\noindent
{\bf Claim 3.} Let $x \in V(H)$ be a node of $H$ and $a \in E(H)$ be the unique arc of $H$ above and incident to $x$. Then any handle $h \in \chd{\tri} \setminus \tri_x$ that touches $\partial\tri_x$ is adjacent or equal to a $1$-handle of $\chd{\tri}$ corresponding to an arc of $\Gamma(\tri)$ routed through $a$.

\smallskip

\noindent{\itshape Proof of Claim 3.}  Recall that $\tri_x$ is spanned by those $0$-handles of $\chd{\tri}$ that correspond to the leaves of $H$ below $x$. Turning this around, every handle in $\chd{\tri} \setminus \tri_x$ is either a $0$-handle whose corresponding leaf is {\em not} below $x$, or is adjacent to at least one such $0$-handle. Now let $h \in \chd{\tri} \setminus \tri_x$ be a handle that touches $\partial\tri_x$.

First, observe that $h$ cannot be a $0$-handle. Indeed, if $h$ is a $0$-handle not in $\tri_x$ then its corresponding leaf is not below $x$, and therefore all $h' \in \chd{\tri}$ adjacent to $h$ are not part of $\tri_x$ either. As the union of these handles $h'$ comprise a neighborhood of $h$, it follows that $\partial h \cap \partial \tri_x = \emptyset$, contradicting the assumption that $h$ touches $\partial\tri_x$.

Second, notice that if $h$ is an $i$-handle ($i \in \{1,2,3\}$) and no $0$-handles adjacent to $h$ are below $x$, then $h$ is separated from $\tri_x$ by the union of the $h' \in \chd{\tri}\setminus\{h\}$ that are adjacent to at least one of these $0$-handles. Thus there exists a $0$-handle $h_1 \in \chd{\tri}$ adjacent to $h$ with corresponding leaf below $x$. Moreover, some other $0$-handle adjacent to $h$, say $h_2$, must be in $\chd{\tri}\setminus\tri_x$, since otherwise $h$ must be part of $\tri_x$.

If $h$ is a $1$-handle, then $h_1$ and $h_2$ are precisely the two $0$-handles adjacent to $h$, which thus corresponds to an arc of $\Gamma(\tri)$ routed through $a \in E(H)$ and we are done. If $h$ is a $2$- or $3$-handle, then there is an alternating sequence $h_1 = h^{(0)}, h^{(1)}, h^{(2)}, \ldots, h^{(p)} = h_2$ of $0$- and $1$-handles adjacent to $h$, and $h^{(j)}$ being adjacent to $h^{(j+1)}$ ($0 \leq j < p$). Since $h_1 \in \tri_x$ and $h_2 \in \chd{\tri}\setminus\tri_x$, there exists some even $q \in \{0,2,\ldots, p-2\}$ for which $h^{(q)}  \in \tri_x$ and $h^{(q+2)} \in \chd{\tri}\setminus\tri_x$. But then $h^{(q+1)}$ is a $1$-handle of $\chd{\tri}$ adjacent to $h$ that corresponds to an arc of $\Gamma(\tri)$ routed through $a$. This concludes the proof of Claim 3.

By construction, $\partial_+\mathcal{C}_v^1 \approx \partial(\tri_u \cup \tri_{u'} \cup \mathscr{H}_{e,e'})$, where $\mathscr{H}_{e,e'}$ consists of the $1$-handles in $\chd{\tri}$ corresponding to arcs of $\Gamma(\tri)$ running parallel to both $e = \{u,v\}$ and  $e' = \{u',v\}$. (Here $\mathcal{S}_1 \approx \mathcal{S}_2$ denotes that the surfaces $\mathcal{S}_1$ and $\mathcal{S}_2$ are parallel, and hence, in particular, of the same genus.)

Let $\mathcal{X}$ be the submanifold of $\manifold$ built from the handles in $\chd{\tri} \setminus (\tri_u \cup \tri_{u'})$ touching $\partial(\tri_u \cup \tri_{u'})$.
It follows from Claim 3, that each handle in $\mathcal{X}$ is either a $1$-handle routed through $e$  or  $e'$, or a $2$- or $3$-handle adjacent to such a $1$-handle.  In particular, $\mathcal{X}$ consists of at most $\frac{3k}{2}$ $1$-handles, at most $\frac{9k}{2}$ $2$-handles, and at most $\frac{9k}{2}$ $3$-handles, cf.\ the paragraph before Claim 3. Since the $2$- and $3$-handles of $\chd{\tri}$ form a handlebody, the $2$- and $3$-handles of $\mathcal{X}$ form a union of handlebodies, denoted by $\mathcal{X}_{2,3}$, with sum of genera at most $\frac{9k}{2}$.

Consider the submanifold $\mathcal{Y} \subseteq \mathcal{X}$ obtained from $\mathcal{X}_{2,3}$ by attaching to it all $1$-handles of $\mathcal{X}$ {\em not} in $\mathscr{H}_{e,e'}$. (These are precisely the $1$-handles of $\chd{\tri}$ that correspond to arcs of $\Gamma(\tri)$ running parallel either to $e$ or to $e'$ but not to both.) Note that these $1$-handles are attached to $\mathcal{X}_{2,3}$ as $2$-handles. Each of these attachments either increases the number of boundary surface components, or decreases the overall sum of genera of the boundary components by one. Consequently, $g(\partial\mathcal{Y}) \leq g(\partial\mathcal{X}_{2,3}) \leq \frac{9k}{2}$. Finally, by construction, $\partial_+\mathcal{C}_v^1$ is parallel to the union of some components of $\partial\mathcal{Y}$, and therefore $g(\partial_+\mathcal{C}_v^1) \leq g(\partial\mathcal{Y}) \leq \frac{9k}{2}$.

\medskip

Bounding above the genus of $\partial\mathcal{H}_r$ is analogous. The only difference is that $\partial\mathcal{H}_r \approx \partial(\tri_s \cup \tri_{t} \cup \mathscr{H}_{r})$, where $\mathscr{H}_{r}$ now consists of the at most $k$ $1$-handles in $\chd{\tri}$ which correspond to arcs of $\Gamma(\tri)$ routed through the root arc $r = \{s,t\} \in E(H)$. Here an even stronger bound holds, i.e., $g(\partial\mathcal{H}_r) \leq 3k < \frac{9k}{2}$.

\medskip

From the definition of graph width it immediately follows that $\graph{\manifold} \leq \frac{9k}{2}$. 
\end{proof}

\newpage

\appendix
\section{Most triangulations have large treewidth}
\label{app:most}

As mentioned in the \hyperref[sec:intro]{Introduction}, most triangulations of most $3$-manifolds must have large treewidth. Here we briefly review two well-known and simple observations that, together, imply this fact. Since they are hard to find in the literature, we also sketch their proofs.

\begin{prop}
\label{prop:manyMfds}
There exists a constant $C > 1$, such that there are at least $C^{n \log (n)}$ $3$-manifolds which can be triangulated with $\leq n$ tetrahedra.
\end{prop}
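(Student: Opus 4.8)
The plan is to exhibit an explicit family of closed $3$-manifolds, indexed by a large combinatorial data set, each of which admits a triangulation with $O(n)$ tetrahedra, and to argue that not too many members of the family coincide as manifolds. A convenient source is lens spaces $\operatorname{L}(p,q)$ (cf.\ Example~\ref{ex:lens}): for $q$ with $1 \le q < p$ and $\gcd(p,q)=1$, the space $\operatorname{L}(p,q)$ can be triangulated with $O(\log p)$ tetrahedra (for instance via a layered solid torus / layered lens space construction, where each ``layer'' is a single tetrahedron and the number of layers is governed by the continued fraction expansion of $p/q$, which has length $O(\log p)$). Setting $p$ of size roughly $2^{cn}$ for a suitable small constant $c>0$ then yields, for each admissible $q$, a $3$-manifold triangulated with $\le n$ tetrahedra; the number of admissible $q$ is $\phi(p)$, which is $\Omega(p/\log\log p)$, hence of size $2^{\Omega(n)}$. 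However, $2^{\Omega(n)}$ is not yet $C^{n\log n}$, so lens spaces with a single parameter $p$ are not enough on their own.

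To reach the super-exponential count $C^{n\log n}$ one should instead take connected sums. The idea: fix a block size $b$ (a constant), and from the lens spaces of the previous paragraph select a fixed pool of $M = 2^{\Omega(b)}$ pairwise non-homeomorphic lens spaces, each triangulable with $\le b$ tetrahedra. Now form connected sums $\manifold = \operatorname{L}_{i_1} \# \operatorname{L}_{i_2} \# \cdots \# \operatorname{L}_{i_m}$ of $m$ of them (with repetition allowed), where $m \approx n/b$; a connected sum of two triangulated manifolds can be realized with a number of tetrahedra that is the sum of the two sizes plus $O(1)$, so $\manifold$ has a triangulation with $O(m b) = O(n)$ tetrahedra, and after rescaling constants, $\le n$ tetrahedra. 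By uniqueness of prime decomposition for oriented $3$-manifolds (Milnor), two such connected sums are homeomorphic if and only if the multisets $\{i_1,\dots,i_m\}$ agree. The number of multisets of size $m$ from a pool of $M$ elements is $\binom{M+m-1}{m} \ge (M/m)^m$; with $M = 2^{\Omega(b)} = n^{\Omega(1)}$ for $b = \Theta(\log n)$ and $m = \Theta(n/\log n)$, this is $\bigl(n^{\Omega(1)}\bigr)^{\Theta(n/\log n)} = 2^{\Omega(n)}$ — still only exponential. The correct calibration is to take $b$ a fixed constant, so $M$ is a fixed constant $\ge 2$, and $m = \Theta(n)$: then the count is $\ge M^{\,\Theta(n)}$, again merely exponential. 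To squeeze out the extra $\log n$ factor, one lets the pool size grow: choose the pool to consist of lens spaces with triangulation size up to $O(\log n)$, so $M = n^{\Theta(1)}$, and take $m = \Theta(n/\log n)$ summands. Then the number of distinct homeomorphism types is at least
\[
  \Bigl(\tfrac{M}{m}\Bigr)^{m} \;\ge\; \bigl(n^{\delta}\bigr)^{c n/\log n} \;=\; 2^{\,\delta c\, n} ,
\]
which is still exponential; the honest route to $C^{n\log n}$ is to allow the summands themselves to range over a pool of size $n^{\Theta(1)}$ with $\Theta(n)$ of them but of bounded individual size, giving $\binom{n^{\Theta(1)} + \Theta(n)}{\Theta(n)} \ge (n^{\Theta(1)})^{\Theta(n)} = 2^{\Theta(n\log n)} = C^{n\log n}$. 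Concretely: use a pool of $\Theta(n^{1/2})$ distinct lens spaces each triangulable with $\le \Theta(\log n) \le$ a small fraction of $n$ tetrahedra, and form connected sums of $m = \Theta(n/\log n)$ of them; then $mb = O(n)$ and the number of multisets is $\binom{\Theta(n^{1/2}) + m - 1}{m}$. One checks $\binom{N+m-1}{m} \ge (N/m)^m$, and with $N = \Theta(n^{1/2})$, $m = \Theta(n/\log n)$ we get $(N/m)^m = (n^{1/2}/\Theta(n/\log n))^{\Theta(n/\log n)}$, whose logarithm is $\Theta(n/\log n)\cdot(\tfrac12 - 1)\log n = -\Theta(n)$ — so this variant collapses. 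The clean statement that does work: take a pool of $N = n$ distinct lens spaces, each triangulable with a constant times $\log n$ tetrahedra is too big; instead take each summand of bounded size from a pool of size $2$, i.e. just $\operatorname{L}(5,1)$ and $\operatorname{L}(5,2)$ (or $\mathbb{RP}^3$ and $\operatorname{L}(3,1)$), each triangulable with $\le c_0$ tetrahedra, and form $\Theta(n)$-fold connected sums; distinct binary strings of length $\Theta(n)$, up to the action of permuting summands, still give only $\Theta(n)$ types — useless. The feature forcing $n\log n$ must be that a single triangulation of size $n$ can carry a connected sum of $\Theta(n/\log n)$ lens spaces whose parameters are $\Theta(n^{\epsilon})$-sized, and the number of such parameter tuples is $(n^{\epsilon})^{\Theta(n/\log n)} = 2^{\Theta(\epsilon n)}$; so the $n\log n$ genuinely comes from letting one lens parameter be as large as $2^{\Theta(n)}$ while simultaneously allowing $\Theta(n/\log n)$ independent summands of size $\Theta(\log n)$ each.

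**The main obstacle.** The delicate point is the bookkeeping that simultaneously (i) keeps the total tetrahedron count at $\le n$ and (ii) produces $\ge C^{n\log n}$ distinct homeomorphism types; the two requirements pull in opposite directions, and one must find the right regime (summand sizes $\Theta(\log n)$, number of summands $\Theta(n/\log n)$, summands drawn from a pool whose size is $2^{\Theta(\log n)} = n^{\Theta(1)}$) so that $\binom{\text{pool}+\text{count}}{\text{count}}$ is genuinely super-exponential. I expect the author handles (ii) by invoking the prime decomposition theorem for oriented $3$-manifolds to turn ``distinct multisets of prime summands'' into ``distinct manifolds'', and handles (i) by the standard fact that connecting sums and the layered-lens-space construction are linear in the number of tetrahedra up to additive constants; the counting estimate $\binom{a+b}{b} \ge (a/b)^b$ together with the choice $a = n^{\Theta(1)}$, $b = \Theta(n/\log n)$, $a \gg b$, then gives the claimed bound.
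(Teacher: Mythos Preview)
Your approach via connected sums of lens spaces is genuinely different from the paper's, but it has a fundamental gap: it cannot produce more than $C^n$ pairwise non-homeomorphic manifolds, no matter how the parameters are tuned. The reason is a simple pigeonhole. Any prime $3$-manifold that you are willing to use as a summand must itself be triangulable with some number $s$ of tetrahedra, and the number of (homeomorphism types of) $3$-manifolds with a triangulation of size $\le s$ is at most $C_0^{\,s}$, since there are only boundedly many ways to pair and glue the faces of $s$ tetrahedra. If your connected sum has summands of sizes $s_1,\dots,s_m$ with $\sum_i s_i \le n$, then the number of ordered tuples of summands is at most $\prod_i C_0^{\,s_i} = C_0^{\,\sum s_i} \le C_0^{\,n}$; summing over $m$ and over compositions of $n$ only costs another factor $2^n$. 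So the total count is $\le (2C_0)^n$. Your repeated recalibrations in the proposal keep hitting this ceiling, and the final paragraph's estimate $\binom{a+b}{b}$ with $a=n^{\Theta(1)}$ and $b=\Theta(n/\log n)$ still gives only $2^{\Theta(n)}$ (take logs: $b\log(a/b)=\Theta(n)$), not $C^{n\log n}$.

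The paper instead uses \emph{graph manifolds}: pick a fixed Seifert fibred block of constant size with, say, three torus boundary components, and glue copies of it along tori according to the edges of a trivalent graph $G$. A graph on $k$ nodes yields a triangulation with $O(k)$ tetrahedra, and by the JSJ/graph-manifold classification, non-isomorphic graphs give non-homeomorphic manifolds. The number of non-isomorphic trivalent graphs on $k$ nodes is $k^{\Theta(k)}=2^{\Theta(k\log k)}$, which with $k=\Theta(n)$ gives $C^{n\log n}$. The essential point your argument misses is that the super-exponential growth must already occur among \emph{prime} (irreducible) manifolds; connected sums of any family that grows only exponentially in triangulation size cannot bootstrap past $C^n$.
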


\begin{proof}[Sketch of the proof]
Note that the number of isomorphism classes of graphs is superexponential in the number of nodes. Consider the family of graph manifolds where the nodes are Seifert fibered spaces of constant size.\footnote{See \cite{orlik2006seifert} for an overview on graph manifolds and Seifert fiber spaces.} Conclude by the observation that graph manifolds modeled on non-isomorphic graphs are non-homeomorphic, cf.\ \cite[Section 3]{lackenby2017conditionally}.
\end{proof}

\begin{prop}
\label{prop:fewSmallTW}
Given $k \geq 0$, there exists a constant $C_k >1 $ such that there are at most $C_k^n$ triangulations of $3$-manifolds with dual graph of treewidth $\leq k$ and $\leq n$ tetrahedra.
\end{prop}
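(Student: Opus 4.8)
The plan is to prove that there are at most $C_k^{\,n}$ such triangulations \emph{up to combinatorial isomorphism} --- which is the relevant notion here, since a labelled count is already superexponential ($\mathbb{S}^3$, say, has triangulations of bounded pathwidth with arbitrarily many tetrahedra, hence superexponentially many vertex labellings) --- by encoding each isomorphism class as a bit string of length $O_k(n)$; as there are at most $2^{\ell}$ strings of length $\ell$, this yields the bound with a constant $C_k$ depending only on $k$. The encoding has a ``tree-shaped'' skeleton forced by the width bound, carrying a bounded amount of local gluing information at each of its nodes.

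The skeleton comes from congestion. The dual graph $\Gamma = \Gamma(\tri)$ is a multigraph on $n$ nodes of maximum degree at most $4$, so Theorem~\ref{thm:tw-cng-tw} gives $\cng{\Gamma} \le 4(\tw{\Gamma}+1) \le 4(k+1) =: c$; by Definition~\ref{defn:cng} this means there is an unrooted binary tree $H$ whose leaves are the nodes of $\Gamma$ such that, for every arc $h$ of $H$, at most $c$ arcs of $\Gamma$ join the two sides of the vertex-partition obtained by deleting $h$ (i.e.\ a carving decomposition of $\Gamma$ of width $\le c$). Rooting $H$ at an arbitrary arc makes it a full binary tree with $n$ leaves and $n-1$ internal nodes, whose ordered shape is one of at most $4^{n}$ possibilities. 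I would then recover $\Gamma$ up to isomorphism by processing $H$ from the leaves to the root, attaching to each node $v$ the submultigraph $G_v$ of $\Gamma$ induced by the leaves below $v$, together with its set $P_v$ of at most $c$ dangling arc-ends (those crossing the arc above $v$): a leaf contributes one vertex with $0$, $1$ or $2$ loops (at most three cases); at an internal node with children $v_1,v_2$ the only new datum is a partial matching between the canonically ordered lists $P_{v_1}$ and $P_{v_2}$ --- one of at most $\sum_{j=0}^{c}\binom{c}{j}^{2}j! =: f(c)$ --- after which $G_v = G_{v_1}\sqcup G_{v_2}$ plus the newly matched arcs, and $P_v$ is the leftover list; and at the root the matching is perfect, one of at most $c!$ choices, with $G_{\mathrm{root}}=\Gamma$. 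Consequently at most $4^{n}\cdot 3^{n}\cdot f(c)^{\,n}\cdot c! \le C_1(k)^{\,n}$ isomorphism types of dual graph can occur.

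It then remains to bound, for a fixed dual graph $\Gamma$, the number of triangulations realising it: such a triangulation is obtained by choosing, at each node, a bijection between its incident arc-ends and the four faces of a tetrahedron (at most $4!=24$ choices), and at each of the (at most $2n$) arcs one of the six simplicial identifications of the two corresponding triangles; so at most $24^{n}\cdot 6^{2n}$ triangulations share a given dual graph. Multiplying this with the count of the previous paragraph and summing over the number $m\le n$ of tetrahedra (only a polynomial-factor loss) gives the bound $C_k^{\,n}$.

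I expect the middle step --- pinning the dual graph down to one of only exponentially many --- to be the crux, and the reason for routing through congestion rather than through tree decompositions directly is that the shape of a carving tree can be described by $O(n)$ bits \emph{without naming its leaves}, whereas recording which leaf is which tetrahedron would cost a further $\log(n!)=\Theta(n\log n)$ bits and wreck the estimate. The points needing care are that the lists of dangling arc-ends are propagated with a well-defined canonical order, so that the local matchings genuinely determine $\Gamma$, and that spurious choices creating more than $c$ dangling ends at some node --- which cannot arise from an honest carving tree --- are simply declared invalid.
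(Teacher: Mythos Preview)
Your argument is correct and complete, but it takes a genuinely different route from the paper's. The paper's sketch simply observes that the class of graphs of treewidth at most $k$ is minor-closed and then invokes the theorem of Norine, Seymour, Thomas and Wollan that every proper minor-closed class is \emph{small} (at most $c^n n!$ labelled members on $n$ vertices, hence at most $c^n$ isomorphism types); the final step---bounding the number of triangulations per dual graph---is the same as yours. By contrast, you bypass that black box entirely: you pass from treewidth to congestion via Theorem~\ref{thm:tw-cng-tw}, then give an explicit $O_k(n)$-bit encoding of the carving tree together with the local partial matchings of dangling arc-ends. What the paper's approach buys is brevity and generality (any minor-closed bound on the dual graph would do). What your approach buys is self-containment and an explicit constant $C_k$; it also illustrates concretely why carving decompositions are well-suited to this kind of counting, since the tree shape can be recorded in $O(n)$ bits without labelling the leaves---the very point you flag as the crux.
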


\begin{proof}[Sketch of the proof]
The property of a graph to be of bounded treewidth is closed under minors. Hence, it follows from a theorem of Norine, Seymour, Thomas and Wollan \cite{norine2006minorclosed} that the number of isomorphism classes of graphs with treewidth $\leq k$ is at most exponential in the number of nodes $n$ of the graph. Furthermore, any given graph can at most produce a number of combinatorially distinct triangulations exponential in~$n$.
\end{proof}

\begin{wrapfigure}{r}{0pt}
\centering
    \includegraphics[scale=.66]{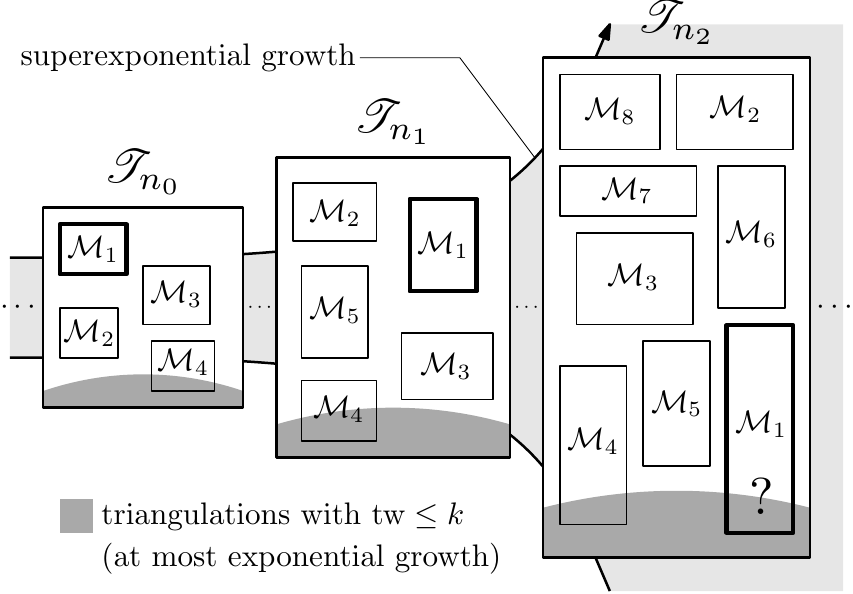}
	\caption{}
	\label{fig:manyMfds}
\end{wrapfigure}
\noindent These propositions tell us that, as $n$ grows larger, the number of triangulations of treewidth at most $k$ represent a rapidly decreasing fraction of the set $\mathscr{T}_n$ of all $(\leq n)$-tetrahedra triangulations of $3$-manifolds.

Let $\protect\boxed{\manifold_i} \subset \mathscr{T}_{n_j}$ denote the set of triangulations of the manifold $\manifold_i$ with at most $n_j$ tetrahedra. The main question we are investigating in this article is the following:

Given a $3$-manifold $\manifold$, does there always exist some $n_\manifold \in \mathbb{N}$, such that the set of triangulations $\protect\boxed{\manifold}$ in $\tri_{n_\manifold}$ overlaps with the region of treewidth $\leq k$ triangulations? In other words, does the set of triangulations of every $3$-manifold eventually behave like the one of $\manifold_1$ on Figure \ref{fig:manyMfds}?
Theorem~\ref{thm:tw} answers this question in the negative in general.

\newpage

\section{Complexity and fixed parameter tractability}
\label{app:complexity}

In Table \ref{tab:zoo} we collect complexity and fixed parameter tractability properties of the problems of computing the considered graph parameters. First, we explain the columns of the table.

\begin{itemize}
\itemsep0em 
	\item \textbf{Complexity.} The computational complexity of the question ``Is $\Par{G}\leq k$?''. Here $k$ is a variable given as part of the input.
	\item \textbf{FPT.} Fixed parameter tractability in the natural parameter. The check mark ($\checkmark$) indicates the following: if $k$ is fixed (as opposed to being a variable part of the input) and $G$ is an $n$-vertex graph, then the answer to the question ``Is $\Par{G} \leq k$?'' can be found in $O(\operatorname{poly}(n))$ time.
	\item \textbf{Bounded-degree graphs.} What is known if we restrict our attention to a family of bounded-degree graphs.
\end{itemize}

\vspace{-0.25em}

\begin{table}[ht]
\centering
\begin{tabular}{llll}  
\toprule
$p$ & \textbf{Complexity} & \textbf{FPT} & \textbf{Bounded-degree graphs}\\
\midrule
tw & ${\bf NP}$-complete \cite{arnborg1987complexity} & $\checkmark$ \cite{bodlaender2012fixed} & remains ${\bf NP}$-complete \cite{bodlaender1997treewidth}\\
pw & ${\bf NP}$-complete \cite{arnborg1987complexity} & $\checkmark$ \cite{bodlaender2012fixed} & remains ${\bf NP}$-complete \cite{monien1988min}\footnote{${\bf NP}$-completeness is shown for the vertex separation number which is equivalent to pathwidth \cite{kinnersley1992vertex}.}\\
cw & ${\bf NP}$-complete \cite{gavril1977some} & $\checkmark$ \cite{thilikos2005cutwidth} & polynomial if tw bounded \cite{thilikos2001polynomial}\\
cng & ${\bf NP}$-complete \cite{seymour1994call} & $\checkmark$ \cite{thilikos2000Carving}\footnote{See the discussion in the introduction of \cite{thilikos2000Carving}.} & \\
\bottomrule
\end{tabular}
\caption{Complexity and fixed parameter tractability of selected graph parameters}
\label{tab:zoo}
\end{table}
We point out that there is a more detailed table in \cite{bodlaender1994tourist}, showing the complexity of computing pathwidth and treewidth on several different classes of graphs.

\section{Computational aspects of different graph parameters}
\label{app:fpt}

A small treewidth $k \geq 0$ of the dual graph $\Gamma (\tri)$ of an $n$-tetrahedron triangulation $\tri$ of a $3$-manifold can be exploited by applying standard dynamic programming techniques to the tetrahedra of the triangulation: in a tree decomposition $\alttree$ of $\Gamma (\tri)$ with $O(n)$ bags realizing width $k$, every bag $B\in V(\alttree)$ corresponds to a subcomplex $X_B \subset \tri$ of at most $k+1$ tetrahedra of $\tri$. Going up from the leaves of $\alttree$, for each bag $B \in V(\alttree)$, compute a list of candidate solutions of the given problem on $X_B \subset \tri$. When processing a new bag $B' \in V(\alttree)$, for all child bags $B_i \in V(\alttree)$, $1 \leq i \leq r$, their lists of candidate solutions (which are already computed) are used to validate or disqualify candidate solutions for $X_{B'}$. Due to property~\ref{twproptwo} of a tree decomposition (Definition~\ref{defn-treewidth}), every time a tetrahedron disappears from a bag while we go up from the leaves to the root of $\alttree$, it never reappears. This means that constraints for a global solution coming from such a ``forgotten'' tetrahedron are fully incorporated in the candidate solutions of the current bags. If for each bag the running time, as well as the length of the list of candidate solutions is a function in $k$, the procedure must have running time $O(n)$ for triangulations with dual graphs of constant treewidth.

Moreover, for every tree decomposition $\alttree$ with $O(n)$ bags, there exists a linear time procedure to preprocess $\alttree$ into a tree decomposition $\alttree'$ of $\Gamma (\tri)$, also with $O(n)$ bags, such that every bag is of one of three types: {\em introduce}, {\em forget}, or {\em join bag} \cite{kloks1994treewidth}. Such a {\em nice tree decomposition}\footnote{See the $3$-manifold software {\em Regina} \cite{regina} for a visualization of a tree decomposition, and a nice tree decomposition of the dual graph of any given triangulation.} has the advantage that only three distinct procedures are needed to process all the bags---causing such FPT-algorithms to be much simpler in structure: Introduce and forget bags are bags in $\alttree$ with only one child bag where a node is either added to or removed from the child bag to obtain the parent bag. Procedures to deal with these situations are often comparatively simple to implement and running times are often comparatively feasible. A join bag is a bag with two child bags such that parent bag and both child bags are identical. Depending on the problem to be solved, the procedure of a join bag can be more intricate, and running times are often orders of magnitude slower than in the other two cases. 

\paragraph*{Pathwidth vs.\ treewidth.} Since every (preprocessed) nice path decomposition is a nice tree decomposition without join bags, every FPT-algorithm in the treewidth of the input is also FPT in the pathwidth with the (often dominant) running time of the join bag removed. Thus, at least in certain circumstances, it can be beneficial to work with nice path decompositions---and thus with pathwidth as a parameter---instead of treewidth and its more complicated join bags in their nice tree decompositions. 

\bigskip

For small cutwidth and for small congestion, similar dynamic programming techniques can be applied to the cutsets of the respective linear or binary tree layouts of the nodes and arcs of the dual graph of a triangulation. Thus, in the following paragraph we compare these parameters to treewidth (and pathwidth), and point out some potential benefits from using them as alternative parameters.   

\paragraph*{Congestion vs.\ treewidth and cutwidth vs.\ pathwidth.} Parameterized algorithms using pathwidth or treewidth operate on {\em bags} containing elements corresponding to tetrahedra of the input triangulations. In contrast to this, parameterized algorithms using cutwidth or congestion operate on {\em cutsets} containing elements corresponding to triangles of the input triangulations. It follows that an algorithm operating on a tree decomposition of width $k$ must handle a $3$-dimensional subcomplex of the input triangulation made of up to $15(k+1)$ faces in one step. An algorithm operating on a tree layout of congestion $k$, however, only needs to consider a $2$-dimensional subcomplex of up to $7k$ faces of the input triangulation per step. Moreover, cutwidth and congestion are equivalent to pathwidth and treewidth respectively (for bounded degree graphs, up to a small constant factor, see Theorems~\ref{thm:parameters} and \ref{thm:tw-cng-tw}), and parameterized algorithms for $3$-manifolds are not just theoretical statements, but may give rise to practical tools outperforming current state-of-the-art algorithms (see, for example, \cite{burton2018algorithms}).
These observations suggest that, at least for some problems, parameterized algorithms using cutwidth or congestion of the dual graph have a chance to outperform similar algorithms operating on pathwidth or treewidth, respectively.

\newpage

\bibliographystyle{my_plainurl}
\bibliography{references}

\end{document}